  \def\Sigma{\textSigma}%
  \def\sigma{\textsigma}%
\numberwithin{equation}{section}
\numberwithin{equation}{subsection}
\theoremstyle{plain}
\newtheorem{theorem}[equation]{Theorem}
\newtheorem{lemma}[equation]{Lemma}
\newtheorem{proposition}[equation]{Proposition}
\newtheorem{corollary}[equation]{Corollary}
\newtheorem{conjecture}[equation]{Conjecture}
\theoremstyle{definition}
\newtheorem{definition}[equation]{Definition}
\newtheorem{example}[equation]{Example}
\newtheorem{remark}[equation]{Remark}
\newtheorem{bekezd}[equation]{}
\newcommand*{\ssw}[1]{\operatorname{sw}\sb{#1}}
\newcommand*{\Pic}[1]{\operatorname{Pic} ( #1 )}
\newcommand{\Spinc}{\texorpdfstring{%
    \ifmmode Spin\textsuperscript{c}\fi\else \operatorname{Spin\sp{c}}\fi}%
  {Spin-c}}
\newcommand{\cO}{\mathcal{O}}
\newcommand{\cH}{\mathcal{H}}
\newcommand*{\linebundle}{\mathcal{L}}
\newcommand{\cV}{\mathcal{V}}
\newcommand{\cW}{\mathcal{W}}
\newcommand{\cF}{\mathcal{F}}
\newcommand{\cP}{\mathcal{P}}
\newcommand{\cPr}{\mathcal Pr}
\newcommand{\cS}{\mathcal{S}}
\newcommand{\cE}{\mathcal{E}}
\newcommand{\cG}{\mathcal{G}}
\newcommand{\cB}{\mathcal{B}}
\newcommand{\cI}{\mathcal{I}}
\newcommand{\cJ}{\mathcal{J}}
\newcommand{\setC}{\mathbb{C}}
\newcommand{\setQ}{\mathbb{Q}}
\newcommand{\setR}{\mathbb{R}}
\newcommand{\setZ}{\mathbb{Z}}
\newcommand{\setP}{\mathbb{P}}
\newcommand{\gs}{\mathfrak{s}}
\newcommand{\hh}{\mathfrak{h}}
\newcommand{\bt}{{\bf t}}
\newcommand{\wxb}{\widetilde{X}(\bar{E})}
\providecommand{\coloneqq}{\mathrel{:=}}
\newcommand*{\twistedbundle}[3][]{\mathinner{{#2}\sb{#1}}%
    {\left(#3\right)}}
\newcommand*{\bundlefromdivisor}[2][]{\twistedbundle[{#1}]{%
    \mathcal{O}}{#2}}
\newcommand*{\smartfrac}[2]{\mathchoice
         \frac{#1}{#2}   
  {\mathinner{{#1}/{#2}}}
  {\mathinner{{#1}/{#2}}}
  {\mathinner{{#1}/{#2}}}
}
\begin{document}

\title{The cohomology of line bundles of splice-quotient singularities}

\date{}

\author{Andr\'as N\'emethi}
\thanks{}
\address{R\'enyi Institute of Mathematics\\
  Budapest\\
  Re\'altanoda u. 13\textendash15\\
  1053\\
 Hungary}
\email{nemethi@renyi.hu}

\keywords{splice-quotient singularities, normal surface
singularities, resolutions of singularities, Hilbert series,
Poincar\'e series, divisorial multi-filtration, cohomology of line
bundles, multiplicity, plumbed 3-manifolds, low-dimensional
topology, Seiberg-Witten invariants}

\subjclass[2000]{Primary 32S05, 32S25, 57M27; Secondary
    32S45, 32S50, 32C35, 57R57}

\begin{abstract}
We provide several results on splice-quotient singularities: (1) a
combinatorial expression of the dimension of the first cohomology
of all `natural' line bundles (for definitions see (\ref{ss:11}));
(2) an equivariant Campillo\textendash Delgado \textendash
Gusein-Zade type formula about the dimension of relative sections
of line bundles,  extending former results about rational and
minimally elliptic singularities;  (3)  in particular, we prove
that the equivariant, divisorial multi-variable Hilbert series is
topological;  (4)  a combinatorial description of  divisors of
analytic function-germs; (5) and  an expression for  the
multiplicity of the singularity from its resolution graph (in
particular solving Zariski's Multiplicity Conjecture for
splice-quotient hypersurfaces).

Additional, in (\ref{re:SW}) we establish a new formula for the Seiberg-Witten
invariants of {\em any} rational homology sphere singularity link,
whose validity for links of splice-quotients is the consequence of the 
above results (and the proof of the
general case will appear in \cite{NSW}).   
\end{abstract}

\maketitle

\section{Introduction}\label{sec:introduction}

Splice-quotient singularities were introduced by Neumann and Wahl
 and became the subject of an intense mathematical
activity
\cite{MR1900786,MR1981612,NWuj2,NWuj,Graph,Wahl2006,OkumaRat,Ouac-c,Opg,NO1,NO2,BN,Stev}.
In their definition, one starts with a resolution graph $\Gamma$
which is a tree, and the genera of all the vertices are zero, and
it also satisfies an additional combinatorial restriction. 
From $\Gamma$  one constructs an equisingular
family of singularities, as analytic realizations supported by the
topological type fixed by $\Gamma$. Rational and minimally
elliptic singularities  \cite{Ouac-c}  and weighted-homogeneous
singularities \cite{Neu} are examples of splice-quotients.

In this article we settle some basic results regarding these
singularities. The fact that they are `canonically' constructed from a
combinatorial object, gives hope that their basic discrete analytic
invariants should be representable from the topology. The present
article unifies apparently different research directions working on
these type of questions for different invariants.

First, we show that the equivariant divisorial multi-variable
Hilbert series \cite{CDGEq,CDGb} (see also \cite{CHR,CDG} for the
non-equivariant version) is topological. In order to do this, we
prove the equivariant Campillo--Delgado--Gusein-Zade formula for
splice-quotients, cf.  (\ref{th:21}). This
identifies a series $\cP$, which codifies the same information as
the Hilbert series itself, with a combinatorially defined  `zeta
function'. (Here, for the equivariant divisorial Hilbert series
and for the Campillo--Delgado--Gusein-Zade formula, we adopt the
formalism established in \cite{CDGb}.)

Such a formula was proved  by Campillo, Delgado and Gusein-Zade
for rational singularities \cite{CDG,CDGEq}, by a different method
it was reproved for rational and extended for minimal elliptic
singularities  in \cite{CDGb}, but it was unknown (at least by the
author) even for weighted-homogeneous singularities. For such a
singularity with a $\setC^*$-action, its reduction to a
one-variable identity (corresponding to the valuation of the
`central' divisor) is the celebrated  Pinkham-Dolgachev-Neumann
formula, which identifies the ($\setC^*$-equivariant) Poincar\'e
series with a topologically defined zeta-function
\cite[(5.7)]{Pi1}, \cite{Neu}.


The Campillo--Delgado--Gusein-Zade formula, surprisingly enough,
can be connected with the (equivariant) Seiberg-Witten Invariant
Conjecture for `natural' line bundles $\linebundle$ (see
\cite{nemethi02:_seiber_witten,Line} for the conjecture and
\cite{INV} for related results). This provides a topological
description of the dimensions $h^1(\linebundle)$ of the cohomology
groups (including the geometric genus corresponding to the trivial
line bundle) involving the Seiberg-Witten invariant of the link,
cf. (\ref{cor:QS}). Although  \cite{BN} provides a proof 
of  the conjecture for splice-quotients, that version,
as the original conjecture itself, contains a restriction
regarding the Chern class of the involved line bundles. Without
this restriction, the topological candidate for
$h^1(\linebundle)$, even at conjectural level, was not known. Here
we treat the complete general case without any restriction on the
Chern class. The `correction term' needed for arbitrary Chern
class is provided by a `truncated part' of the expression
appearing in Campillo--Delgado--Gusein-Zade formula.

In the last section we characterize from the graph the local
divisors of analytic functions for splice-quotient singularities
(a very difficult open task, in general), and we determine topologically
the multiplicity of the singularity too. In particular, we answer
positively Zariski's Multiplicity Conjecture for all
splice-quotients.

\section{Notations and motivations}
\label{sec:main-results}

\subsection{General surface singularities}\label{ss:11}

Let \((X,o)\) be a complex normal surface singularity whose link
is a rational homology sphere.  Let \(\pi:\widetilde{X}\to X\) be
a good resolution with dual graph  \(\Gamma\) whose vertices are
denoted by $\cV$. Recall that the link is a rational homology
sphere if and only if \(\Gamma\) is a tree and all the irreducible
exceptional divisors have genus \(0\).

Set \(L \coloneqq H_2 ( \widetilde{X},\setZ )\). It is freely
generated by the classes of the irreducible exceptional curves
\(\{E_v\}_{v\in\cV}\). They will also be identified with the
integral cycles supported on $E=\pi^{-1}(o)$. If  \(L'\) denotes
\(H^2( \widetilde{X}, \setZ )\), then the intersection form
\((\,,\,)\) on \(L\) provides an embedding \(L \hookrightarrow
L'\) with factor the first homology group \(H\) of \(\partial
\widetilde{X}\) (or, of the link). Moreover,  $(\,,\,)$ extends to
$L'$. $L'$ is freely generated by the duals \(E_v^*\), where we
prefer the convention $ ( E_v^*, E_w) =  -1 $ for $v = w$, and
$=0$ otherwise.

Effective classes \(l=\sum r_vE_v\in L'\) with all
\(r_v\in\setQ_{\geq 0}\) are denoted by \(L'_{\geq 0}\), and
$L_{\geq 0}:=L'_{\geq 0}\cap L$. Moreover, denote by $\cS'$ the
anti-nef cone $\{l'\in L'\,:\, (l',E_v)\leq 0 \ \mbox{for all
$v$}\}$.  It is generated over $\setZ_{\geq 0}$ by the
base-elements $E_v^*$. Since  {\em all the entries } of $E_v^*$
are {\em strict} positive, $\cS'$ is a sub-cone of $L_{\geq 0}'$,
and for any fixed $a\in L'$ the set
\begin{equation}\label{eq:finite}
\{l'\in \cS'\,:\, l'\ngeq a\} \ \ \mbox{is finite}.
\end{equation}
Set \(Q:=\{\sum l'_vE_v\in L', \ 0\leq l_v' < 1\}\).
For any $l'\in L'$ write its class in $H$ by $[l']$, and let
$r_{[l']}\in L'$  be its unique representative  in $Q$
with $l'-r_{[l']}\in L$. Finally,
denote by $\theta:H\to \widehat{H}$ the isomorphism $[l']\mapsto
e^{2\pi i(l',\cdot)}$ of $H$ with its Pontrjagin dual
$\widehat{H}$.


\vspace{2mm}

Next we list three major motivations of our investigation with some
interlocking  connecting them.

\subsection{First motivation: cohomology of line
bundles.}\label{FM}
 Most of the analytic geometry of \(\widetilde{X}\) (hence of $(X,o)$ too)
is described by its line bundles and their  cohomology groups. One
has the following basic goal (below the cohomology groups are
considered on $\widetilde{X}$):
%
 {\em For any \(\linebundle\in\Pic{\widetilde{X}}\)
 and effective cycle $l\in L_{\geq 0}$ recover the dimensions
\begin{equation}\label{eq:00}
 (a)  \ \dim \smartfrac{H^0 \left(\linebundle  \right)}{H^0
\left( \linebundle \left( - l \right)
 \right)}\ \ \ \mbox{and} \ \    (b) \ \dim H^1(\linebundle)
 \end{equation}
 from the combinatorics of \,$\Gamma$} (at least for some families of
 singularities).

Thanks to a recent intense activity, there is an increasing optimism to
understand this problem for  `special' line  bundles.
They  are provided by the splitting of the
cohomological exponential exact sequence
summarized below, cf.  \cite[\S 3]{Line}, \cite[(2.2)]{OkumaRat}:
\begin{center}
\ \xymatrix{
  &&& L \ar[d] \ar@{-->}[dl]\\
  0 \ar[r]& {H^1 ( \widetilde{X}, {\mathcal{O}}_{\widetilde{X}} )} \ar[r]&
  \Pic{\widetilde{X}} \ar@<.5ex>[r]^-{c_1 }& L' \ar[r]
  \ar@<.5ex>@{-->}[l]^-{\mathcal{O}}& 0.
}
\end{center}
The first Chern class \(c_1\) is a surjective and it has an
obvious section on the subgroup \(L\): it maps every element to its associated
line bundle. This section has a unique extension \(\mathcal{O}\) to
\(L'\). We call a line bundle \emph{natural} if it is in the image
of this section.

Natural line bundles appear in other geometric constructions as
well. Let \(c:(Y,o)\to (X,o)\) be the universal abelian cover of
\((X,o)\), \(\pi_Y:\widetilde{Y}\to Y\) the normalized pullback of
\(\pi\) by \(c\), and
\(\widetilde{c}:\widetilde{Y}\to\widetilde{X}\) the morphism which
covers \(c\). Then the action of \(H\) on \((Y,o)\) lifts to
\(\widetilde{Y}\) and one has an $H$-eigenspace decomposition
(\cite[(3.7)]{Line} or \cite[(3.5)]{Opg}):
\begin{equation}\label{eq:01}
\widetilde{c}_*\cO_{\widetilde{Y}}=\bigoplus _{l'\in Q}
\cO(-l'),\end{equation}  where $\cO(-l')$ (for $l'\in Q$) is the
$\theta([l'])$-eigenspace of $\widetilde{c}_*\cO_{\widetilde{Y}}$.
This is compatible with the eigenspace decomposition of
$\cO_{Y,o}$ too.


This allows us to regard each $H^0(\widetilde{X},\cO(-l'))$ as the
$\theta([l'])$-eigenspace of the $\setC$-vector space of
meromorphic functions $\gs$ on $\tilde{Y}$ with ${\rm
div}(\gs)\geq \widetilde{c}^*(l')$ (where, in fact,
$\widetilde{c}^*(l')$ is an {\em integral} cycle supported on
$\pi_Y^{-1}(o)$ by \cite[(3.3)]{Line}).


\subsection{Second motivation: equivariant Hilbert series.}\label{SM} For
natural line bundles, the dimensions from (\ref{eq:00})(a) can be
organized in a generating function. Indeed, once a resolution
$\pi$ is fixed, $\cO_{Y,o}$ inherits the {\em divisorial
multi-filtration} (cf. \cite[(4.1.1)]{CDGb}):
\begin{equation}\label{eq:03}
\cF(l'):=\{ f\in \cO_{Y,o}\,|\, {\rm div}(f\circ\pi_Y)\geq \widetilde{c}^*(l')\}.
\end{equation}
Let $\hh(l') $ be the dimension of the $\theta([l'])$-eigenspace
of $\cO_{Y,o}/\cF(l')$. Then, one defines  the {\em equivariant
divisorial Hilbert series} by
\begin{equation}\label{eq:04}
\cH(\bt)=\sum _{l'=\sum l_vE_v\in L'}
\hh(l')t_1^{l_1}\cdots t_s^{l_s}=\sum_{l'\in L'}\hh(l')\bt^{l'}\in
\setZ[[L']].
\end{equation}
(We are aware that in $\cH(\bt)$ many coefficients are repeated
several times, and that $\sum_{l'\in L'_{\geq 0}} \hh(l')\bt^{l'}$
contains the same information and it lives in a ring of power
series instead of a $\setZ[L']$-module as $\cH(\bt)$ does.
However, we prefer the definition (\ref{eq:04}) since it already
appeared in the literature together with the nice relation
(\ref{eq:06}).)

Notice that the terms of the sum reflect the $H$-eigenspace
decomposition too.  E.g., $\sum_{l\in L}\hh(l)\bt^{l}$ corresponds
to the $H$-invariants, hence it is the {\em Hilbert series}  of
$\cO_{X,o}$ associated with the $\pi^{-1}(o)$-divisorial
multi-filtration  (considered and intensively studied, see  e.g.
\cite{CHR} and the citations therein, or \cite{CDG}). The second
central problem is:
 {\em Recover (for some families of
singularities) $\cH(\bt)$ from $\Gamma$}.

Write $l'\in L'$ as $l'=r_{[l']}+l$ for some $l\in L$. Then (see e.g.
\cite[(4.2.4)]{CDGb})
\begin{equation}\label{eq:05}
\hh(l')=\dim
\frac{H^0(\cO(-r_{[l']}))}{H^0(\cO(-r_{[l']}-\max\{0,l\}))}.
\end{equation}
Hence in those cases when the dimensions from (\ref{eq:00})(a)
--- applied for bundles $\cO(-l')$, $l'\in Q$ --- are topological,
$\cH(\bt)$ is topologically determined too.


\subsection{Third motivation: Campillo--Delgado--Gusein-Zade
identity.}\label{TM} Comparison of $\cH$ with topological
invariants lead Campillo, Delgado and Gusein-Zade to consider the
next series, cf. \cite{CDG,CDGEq}:
\begin{equation}\label{eq:06}
\cP(\bt)=-\cH(\bt) \cdot \prod_v(1-t_v^{-1})\in \setZ[[L']].
\end{equation}
Above, $\setZ[[L']]$ is regarded as a module over  $\setZ[L']$.
Campillo, Delgado and Gusein-Zade for several families of
singularities proposed a {\em topological description for
$\cP(\bt)$} (presented here in (\ref{th:21})). For
more positive cases when that description holds, and for some
counterexamples too, see \cite{CDGb}. Thus, it is natural to ask:
{\em how general is this phenomenon (i.e. topological
characterization), and where are its limits}?

Although the multiplication by   $\prod_v(1-t_v^{-1})$ in
$\setZ[[L']]$ is not injective, hence apparently $\cP$ contains
less information then $\cH$ (cf. (\ref{eq:06})), they determine
each other (a fact noticed in \cite{CDG} for the non-equivariant
case, the precise `invertion' follows from (\ref{eq:10b}) below).


\section{General results}\label{s:general}

\subsection{}
The next proposition puts in evidence that in all the
invariants listed in the previous section the crucial ingredient
is:
\begin{equation}
  \label{eq:1}
  h_{\linebundle} \coloneqq \sum_{I \subseteq \cV}
(-1)^{\left\lvert I \right\rvert + 1}
  \dim \smartfrac{H^0 \left( \linebundle \right)}
   {H^0 \left( \linebundle \left( -E_I\right)
  \right)},
\end{equation}
where \(E_I=\sum_{v \in I} E_v\). Notice that  if \((c_1(\linebundle),E_v)
< 0\) for some
$v\in\cV$,  then for any \(I\not\ni v\) one has an isomorphism
\(H^0(\linebundle(-E_{I\cup
v}))\to H^0(\linebundle(-E_I))\), hence
\begin{equation}\label{eq:1b}
h_{\linebundle}=0 \ \ \mbox{whenever \ $-c_1(\linebundle)\not\in\cS'$.}
\end{equation}
Therefore, in the next expressions (\ref{eq:07})-(\ref{eq:08}) the
sums are finite (cf. (\ref{eq:finite})).

In fact, $\cP$ is also  supported on the anti-nef cone $\cS'$ (see
e.g. \cite[(4.2)]{CDGb}), and by (\ref{eq:05}) and (\ref{eq:06})
(for more details see \cite[(4.2.12)]{CDGb}) the generating function
for  $\{h_{\linebundle}:\linebundle \ \mbox{natural}\}$ is exactly
the series $\cP$, sitting in the ring $\setZ[[\cS']]$:
\begin{equation}\label{eq:10}
      \sum_{l'\in L'} h_{\cO(-l')} \bt^{l'} =\cP(\bt).
\end{equation}

\begin{proposition}\label{th:1} Let $(X,o)$ be a normal surface singularity whose link is a
rational homology sphere. Fix one of its good resolutions
$\widetilde{X}\to X$. Denote by $K\in L'$ the {\em canonical
class} satisfying $(K+E_v,E_v)=-2$ for all $v\in\cV$. Then for any
line bundle $\linebundle\in{\rm Pic}(\widetilde{X})$ the following
facts hold:

\begin{enumerate}
\item\label{th:1.1}
For every effective cycle \(l \in  L_{\geq 0}\)
\begin{equation}\label{eq:07}
    \dim \smartfrac{H^0 \left( \linebundle \right)}
    {H^0 \left( \linebundle \left( - l \right) \right)} =
    \sum_{ a\in L_{\geq 0},  a \ngeq l}
    h_{\twistedbundle{\linebundle}{-a}}.
\end{equation}
In particular, for any $l'\in L'$  (via (\ref{eq:05})) one has
\begin{equation}\label{eq:10b}
\hh(l')=\sum_{a\in L,\, a\not\geq 0} h_{\cO(-l'-a)}.
\end{equation}
\item\label{th:1.2}
There exists a constant $\mathrm{const}_{[\linebundle]}$, depending only on the
class of $[\linebundle]\in Pic(\widetilde{X})/L$,  such that
\begin{equation}\label{eq:08}
   - h^1 \left( \linebundle \right) = \sum_{a \in L,\, a \nleq 0}
    h_{\twistedbundle{\linebundle}{a}} +
    \mathrm{const}_{[\linebundle]} +
    \smartfrac{{\left(K - 2 c_1  \left( \linebundle \right)\right)}^2 + |\cV|}
{8}.
  \end{equation}
\end{enumerate}
\end{proposition}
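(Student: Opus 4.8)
The plan is to derive both identities from a single bookkeeping principle: the quantity $h_{\linebundle}$ in \eqref{eq:1} is an inclusion–exclusion ``corner'' term which, when summed over a box of twists, collapses telescopically to a single cohomological difference. First I would establish part \eqref{th:1.1}. For a fixed effective $l=\sum_{v}m_vE_v\in L_{\ge 0}$, consider the finite poset of $a\in L_{\ge 0}$ with $a\ngeq l$. For each such $a$ and each $I\subseteq\cV$, the summand
\[
(-1)^{|I|+1}\dim\smartfrac{H^0(\linebundle(-a))}{H^0(\linebundle(-a-E_I))}
\]
can be rewritten, via the elementary identity $\dim H^0(\linebundle(-a))/H^0(\linebundle(-a-E_I))=\dim H^0(\linebundle)/H^0(\linebundle(-a-E_I))-\dim H^0(\linebundle)/H^0(\linebundle(-a))$, as a difference of two ``$H^0$-drop along a divisor'' terms. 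Summing first over $I$ and then over $a$, the alternating signs cause almost complete cancellation: the only $a$-values that survive are those on the ``inner boundary'' of the box $\{a:a\ngeq l\}$, i.e.\ those $a$ for which exactly the coordinates forced by $l$ are active, and the telescoping in each coordinate $v$ runs from $0$ up to $m_v$. What remains is precisely $\dim H^0(\linebundle)/H^0(\linebundle(-l))$. I would formalize this by induction on $\sum_v m_v$: the case $l=E_v$ is immediate from the definition of $h$, and the inductive step writes $l=l_0+E_v$ for a suitable $v$ with $m_v\ge 1$ and compares the two boxes $\{a\ngeq l_0\}$ and $\{a\ngeq l\}$, whose symmetric difference is a ``slab'' on which the $h$-sum telescopes by one layer. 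Specializing \eqref{eq:07} to $\linebundle=\cO(-l')$ and $l=\max\{0,l\}$, and feeding the result into \eqref{eq:05}, yields \eqref{eq:10b} (after noting that $a\in L_{\ge 0}$, $a\ngeq\max\{0,l\}$ matches $a'\in L$, $a'\not\geq 0$ under $a=a'+\text{(something)}$; the translation is routine).

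Next I would prove part \eqref{th:1.2}. The idea is to run the analogous argument with the ``other half'' of $L$, namely $a\in L$ with $a\nleq 0$, and to use Serre duality together with Riemann–Roch on $\widetilde{X}$ to convert the tail into $-h^1$. Concretely, $h_{\linebundle(a)}$ can be expressed purely in terms of Euler characteristics: because the link is a rational homology sphere, the $E_v$ are $\mathbb{P}^1$'s, so $\chi(\linebundle(a))-\chi(\linebundle(a-E_I))$ is a combinatorial quantity, and by the same inclusion–exclusion over $I$ one gets
\[
h_{\linebundle(a)}=\chi(\linebundle(a))-\chi(\linebundle(a-\textstyle\sum_v E_v))\quad\text{up to } h^1\text{ corrections that vanish for } a\gg 0.
\]
More precisely I would argue that for $a$ outside a finite set the higher cohomology stabilizes, so the infinite sum $\sum_{a\nleq 0}h_{\linebundle(a)}$ differs from a formally divergent sum of $\chi$-differences by the finitely many genuinely cohomological terms, and the divergent part is resummed using Riemann–Roch $\chi(\linebundle(a))=\chi(\cO_{\widetilde{X}})-\tfrac12\big((c_1(\linebundle)+a)^2+(c_1(\linebundle)+a,K)\big)$ — i.e.\ $\chi(\linebundle(a))=-\tfrac12(c_1(\linebundle)+a)\big(c_1(\linebundle)+a-K\big)$ in the usual normalization, whose leading quadratic part telescopes against the box boundary and produces the closed form $((K-2c_1(\linebundle))^2+|\cV|)/8$. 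Everything that does not depend on $a$ in a way sensitive to $c_1(\linebundle)$ modulo $L$ gets absorbed into $\mathrm{const}_{[\linebundle]}$; that this residue depends only on $[\linebundle]\in\Pic(\widetilde X)/L$ follows because replacing $\linebundle$ by $\linebundle(b)$ with $b\in L$ merely reindexes the summation set $\{a\nleq 0\}$ by a translation, changing finitely many boundary terms in a way that, combined with the change in the Riemann–Roch polynomial, is a function of $b$ alone (one then checks it is actually independent of $b$, giving a well-defined class invariant).

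The main obstacle I expect is controlling the ``tail'' rigorously in part \eqref{th:1.2}: the sum $\sum_{a\nleq 0}h_{\linebundle(a)}$ is a sum of finitely many nonzero terms (by \eqref{eq:1b} and \eqref{eq:finite} one needs $-c_1(\linebundle(a))=-c_1(\linebundle)-a\in\cS'$, which bounds $a$ from above, while $a\nleq 0$ is the complementary half-space — so finiteness must be re-derived in this regime), but passing from $h_{\linebundle(a)}$ to $\chi$-differences requires knowing that $h^1(\linebundle(a-E_I))$ and $h^1(\linebundle(a))$ coincide outside a finite range, which is a vanishing/stabilization statement for $H^1$ of line bundles twisted deep into (resp.\ out of) the anti-nef cone. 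I would handle this by the standard fact that $H^1(\linebundle(-Z))\to H^1(\linebundle)$ is an isomorphism once $Z$ is large enough in the Lipman/anti-nef sense, together with the dual statement for the other direction via Serre duality $H^1(\linebundle(a))\cong H^1(\linebundle^{\vee}(K-a))^{\vee}$ relative to $\widetilde X$ (or its cohomology-with-supports version), reducing the ``$a\nleq 0$'' tail to a ``$K-a$ eventually anti-nef'' tail of the type already understood. Once that stabilization is in hand, the rest is the same telescoping combinatorics as in part \eqref{th:1.1}, plus one Riemann–Roch resummation, and the constant is pinned down by evaluating the identity on any single convenient representative of $[\linebundle]$ (e.g.\ a natural bundle, where \eqref{eq:10} ties it back to $\cP$).
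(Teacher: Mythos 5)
The reduction of (\ref{eq:07}) to the case $l=E_v$ is indeed how the paper proceeds, but your assertion that this base case is ``immediate from the definition of $h$'' is exactly where the real content lies, and as written your telescoping does not close. Writing $h_{\linebundle(-a)}=\sum_{I\not\ni v}(-1)^{|I|}\dim H^0(\linebundle(-a-E_I))/H^0(\linebundle(-a-E_I-E_v))$ and summing over the truncated box $\{a=\sum_{u\neq v}a_uE_u,\ 0\leq a_u<k\}$, the cancellation leaves not only the wanted term $\dim H^0(\linebundle)/H^0(\linebundle(-E_v))$ but also the outer-boundary terms $(-1)^{|I|}\dim H^0(\linebundle(-kE_I))/H^0(\linebundle(-kE_I-E_v))$ for $\emptyset\neq I\not\ni v$. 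Their vanishing for $k\gg 0$ is not formal and does not follow from the finiteness statements (\ref{eq:1b}), (\ref{eq:finite}): for instance, if $v$ is not adjacent to any vertex of $I$, the restriction of $\linebundle(-kE_I)$ to $E_v$ has degree independent of $k$, so no coordinatewise or purely combinatorial argument gives it. The paper proves that the inclusion $H^0(\linebundle(-kE_I-E_v))\hookrightarrow H^0(\linebundle(-kE_I))$ is an isomorphism for $k\gg 0$ by a genuinely geometric argument: the compactly supported part of the divisor of any nonzero section lies in $c_1(\linebundle)+\cS'$, and any cycle in this shifted anti-nef cone which is $\geq kE_I$ is automatically $\geq kE_I+E_v$ once $k$ is large (using that all entries of the $E_u^*$ are strictly positive). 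This input is absent from your proposal, so (\ref{eq:07}) is not established even for $l=E_v$ when the graph has more than one vertex.

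For part (\ref{th:1.2}) your route (Riemann--Roch applied term by term, stabilization of $h^1$ in the tail, then a resummation) differs from the paper's, and the decisive claim --- that the residual constant depends only on $[\linebundle]\in\Pic{\widetilde{X}}/L$ --- is precisely the point you leave as ``one then checks''. The paper's argument is shorter and uses part (\ref{th:1.1}) as input (note your order of logic is reversed): for $l\in L_{\geq 0}$ the exact sequence $\linebundle(-l)\hookrightarrow\linebundle\twoheadrightarrow\linebundle_l$ gives $\dim H^0(\linebundle)/H^0(\linebundle(-l))=\chi(\linebundle_l)-h^1(\linebundle(-l))+h^1(\linebundle)$; on the other hand, reindexing gives $\sum_{a\nleq 0}h_{\linebundle(a-l)}-\sum_{a\nleq 0}h_{\linebundle(a)}=\sum_{a\geq 0,\,a\ngeq l}h_{\linebundle(-a)}$, which equals the same $H^0$-quotient by (\ref{eq:07}); combining this with Riemann--Roch for $\chi(\linebundle_l)$ shows that $h^1(\linebundle)+\sum_{a\nleq 0}h_{\linebundle(a)}+\smartfrac{(K-2c_1(\linebundle))^2+|\cV|}{8}$ is unchanged under $\linebundle\mapsto\linebundle(-l)$, hence depends only on the class of $\linebundle$ modulo $L$, since every element of $L$ is a difference of effective cycles. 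If you wish to keep your direct resummation, you must actually carry out the tail analysis and the independence check; as it stands, both halves of the proposal stop just short of the two steps that constitute the proof.
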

For the proof see (\ref{ss:prth1}). Some remarks are in order.
\begin{remark}\label{re:UJ} \
\begin{enumerate}
\item\label{re:UJ1}
From (\ref{th:1}) follows that  the set
$\{h_{\linebundle}:\linebundle \ \mbox{natural}\}$ determines all
the ranks  $\dim H^0(\cO(l'))/H^0(\cO(l'-l))$. For $h^1(\cO(l'))$
additionally one also needs the constants
$\{\mathrm{const}_{h}\}_{h\in H}$, where
$\mathrm{const}_{[l']}=\mathrm{const}_{[\cO(l')]}$.

\item\label{re:UJ2}
(\ref{eq:08}) identifies in a natural way (in the world of
algebraic geometry) the constants $\{\mathrm{const}_{h}\}_{h\in
H}$. There are two distinguished  regions for the Chern classes
$l'$ of special interest. First, if one takes  in (\ref{eq:08})
$\linebundle=\cO(-r_h+l)$ with $l\in L_{\geq 0}$, then
\begin{equation}\label{eq:sw}
  - h^1 \left( \cO(-r_h+l)\right) =
    \mathrm{const}_{-h} +
    \smartfrac{{\left(K +2 r_h-2l\right)}^2 + |\cV|}{8}.
\end{equation}
Hence, $\mathrm{const}_{-h}+((K+2r_h)^2+|\cV|)/8$ appears as $ - h^1
\left( \cO(-r_h)\right)$, or as the constant term of the
`quadratic function' $l\mapsto  - h^1 \left( \cO(-r_h+l)\right)$.

On the other hand, for $\linebundle =\cO(-l')$ with $l'\in
-K+\cS'$, by the vanishing of $h^1(\linebundle)$  \cite[(3.2)]{Laufer72},
one has
\begin{equation}\label{eq:KV} \mathrm{const}_{[-l']}=
-\sum_{a\in L,\, a\ngeq 0} h_{\cO(-l'-a)}\ -
\frac{(K+2l')^2+|\cV|}{8}.\end{equation}
\end{enumerate}
\end{remark}

Identity (\ref{eq:sw}) can be compared with the one appearing in
the `Seiberg-Witten Invariant Conjecture'
\cite{nemethi02:_seiber_witten,CDGb}. This predicts the
validity of an identity which can be obtained from
(\ref{eq:sw}) by replacing $ \mathrm{const}_{h}$ by the Seiberg-Witten
invariants of the link $M$. We remind that
the Seiberg-Witten invariants are indexed by the
$\mathrm{spin}^c$-structures of $M$. $\mathrm{Spin}^c(M)$ is an
$H$-torsor (with action $(h,\sigma)\mapsto h*\sigma$), and with a
distinguished element, the {\em canonical}
$\mathrm{spin}^c$-structure $\sigma_{\mathrm{can}}$. This is
induced by the complex structure of $X$, but it can be recovered
from the combinatorics of $\Gamma$ too (see
\cite{nemethi02:_seiber_witten}). Hence, the 
Seiberg-Witten Invariant Conjecture 
\cite{nemethi02:_seiber_witten} 
has the following reinterpretation:

\begin{conjecture}\label{SWIC}
For `nice' normal surface singularities with rational homology
sphere link the constants $\{{\rm const}_h\}_h$ are the Seiberg-Witten
invariants of the link:
\begin{equation*}
\mathrm{const}_h=\ssw{h*\sigma_{\mathrm{can}}};
\end{equation*}
\end{conjecture}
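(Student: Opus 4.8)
The plan is to prove, for splice-quotient singularities, that both $\mathrm{const}_h$ and $\ssw{h*\sigma_{\mathrm{can}}}$ equal one and the same combinatorial expression extracted from the series $\cP$ of (\ref{eq:06}) together with the quadratic Riemann--Roch term already present in (\ref{eq:08}). I therefore split the target identity into an \emph{analytic} half, computing $\mathrm{const}_h$ from $\cP$, and a \emph{topological} half, computing the Seiberg--Witten invariant from the topological candidate for $\cP$; since for splice-quotients these two avatars of $\cP$ coincide by the Campillo--Delgado--Gusein-Zade identity (\ref{th:21}), the two halves are forced to agree.

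For the analytic half I would start from Proposition \ref{th:1}. Choosing a representative $l'\in -K+\cS'$ with $[l']=-h$, Laufer vanishing gives $h^1(\cO(-l'))=0$, so (\ref{eq:KV}) reads
\begin{equation*}
\mathrm{const}_h=-\sum_{a\in L,\,a\ngeq 0} h_{\cO(-l'-a)}-\frac{(K+2l')^2+|\cV|}{8}.
\end{equation*}
The sum is finite by (\ref{eq:1b}) and (\ref{eq:finite}), and by (\ref{eq:10}) each $h_{\cO(-l'-a)}$ is exactly a coefficient of $\cP(\bt)$. Thus $\mathrm{const}_h$ is a finite truncation of $\cP$ plus an explicit quadratic term; this step is purely formal once Proposition \ref{th:1} is granted.

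The genuine analytic work, and the step I expect to be the main obstacle, is to make $\cP$ topological, i.e.\ to prove the Campillo--Delgado--Gusein-Zade formula (\ref{th:21}) for \emph{all} splice-quotients and \emph{all} eigenspaces. This uses the defining structure of these singularities: the eigenspace decomposition (\ref{eq:01}) of $\widetilde{c}_*\cO_{\widetilde{Y}}$, the divisorial multi-filtration (\ref{eq:03}), and the explicit Neumann--Wahl splice equations for the universal abelian cover. One must compute the jumps of the filtration across every class $[l']$ in terms of the semigroup generated by the $E_v^*$, with no nefness restriction on the Chern class, and recognise $\cH(\bt)$ (hence $\cP$ through (\ref{eq:05})--(\ref{eq:06})) as the zeta function attached to $\Gamma$. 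This extends the rational and minimally elliptic computations of \cite{CDGb} to the full splice-quotient class and is the technical heart of the argument.

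For the topological half I would establish the formula announced in (\ref{re:SW}), which expresses $\ssw{h*\sigma_{\mathrm{can}}}$ as the same truncation of the topological zeta function together with the same $-((K+2l')^2+|\cV|)/8$ correction. The natural route is through the structure of Seiberg--Witten invariants of plumbed rational homology spheres: write $\ssw{\sigma}$ via the Reidemeister--Turaev sign-refined torsion and the Casson--Walker invariant, and evaluate the associated $H$-equivariant counting function of the topological Poincar\'e series as a ``periodic constant'', which is precisely the truncation extracting $\mathrm{const}_h$ analytically. Once (\ref{th:21}) identifies the analytic and topological $\cP$, the two truncations coincide coefficientwise and the shared quadratic terms cancel, giving $\mathrm{const}_h=\ssw{h*\sigma_{\mathrm{can}}}$. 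The delicate bookkeeping here is to align the $\mathrm{spin}^c$-torsor action $h\mapsto h*\sigma_{\mathrm{can}}$ with the class $[l']=-h$ and the shift by $K$, so that the indices on the two sides genuinely match; alternatively, one may feed the restricted-Chern-class result of \cite{BN} into this comparison and use the truncated correction only to cover the remaining classes.
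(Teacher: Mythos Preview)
Your plan is logically coherent but is far more roundabout than the paper's argument, and it imports a heavy ingredient that the paper explicitly defers elsewhere.

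The paper does not prove Conjecture~\ref{SWIC} in general; it establishes it only for splice-quotients, as the first sentence of Corollary~\ref{cor:QS}. That proof is essentially one line: equation~(\ref{eq:sw}) gives
\[
-h^1(\cO(-r_h+l))=\mathrm{const}_{-h}+\frac{(K+2r_h-2l)^2+|\cV|}{8}
\]
for any $l\in L_{\geq 0}$, while \cite[(2.2.4)]{BN} (the Seiberg--Witten Invariant Conjecture for splice-quotients in the restricted Chern-class range) gives the identical formula with $\ssw{-h*\sigma_{\mathrm{can}}}$ in place of $\mathrm{const}_{-h}$. Subtracting yields $\mathrm{const}_h=\ssw{h*\sigma_{\mathrm{can}}}$. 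Neither the Campillo--Delgado--Gusein-Zade formula~(\ref{th:21}) nor any truncation of $\cP$ is used here; those enter only for the \emph{second} statement of Corollary~\ref{cor:QS}, the topological expression~(\ref{eq:24}) for $h^1(\cO(-l'))$ with unrestricted $l'$.

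Your route instead requires an independent proof of the combinatorial Seiberg--Witten formula~(\ref{eq:SWSW}) (your ``topological half''). In the logic of this paper that formula, for splice-quotient graphs, is a \emph{consequence} of $\mathrm{const}_h=\ssw{h*\sigma_{\mathrm{can}}}$ (see~(\ref{re:SW})), not a tool to prove it; and for general graphs its proof is explicitly exported to~\cite{NSW}. So you are proposing to reproduce the content of a separate paper in order to derive a result that here follows in one line from~\cite{BN}. Your closing ``alternatively'' is in fact the whole argument: since $\mathrm{const}_{-h}$ is already pinned down by~(\ref{eq:sw}) at a single $l\in L_{\geq 0}$ (e.g.\ $l=0$), the restricted-Chern-class result of~\cite{BN} suffices and there are no ``remaining classes'' left to cover by any truncated correction.
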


\subsection{Proof of Proposition  (\ref{th:1})}\label{ss:prth1}
We will use several times the following rewriting of (\ref{eq:1}).
For any fixed $u\in \cV$, by grouping the subsets of $\cV$ in
pairs $\{I,I\cup u\}_{I\not\ni u}$, we get
\begin{equation}\label{eq:hlam}
h_\linebundle=\sum_{I\not\ni
u}(-1)^{|I|}\frac{H^0(\linebundle(-E_I)}{H^0(\linebundle
(-E_I-E_u))}.
\end{equation}
Notice that part (\ref{th:1.2}) implies part (\ref{th:1.1}) by
Riemann-Roch formula, nevertheless in our proof of (\ref{th:1.2})
we will use part (\ref{th:1.1}) too. Hence we start with the proof
of (\ref{th:1.1}). By comparing (\ref{eq:07}) for $l$ and $l+E_v$
(for some $v\in\cV$) and using induction, it is enough to prove
(\ref{eq:07}) only for $l=E_v$ (and arbitrary $\linebundle$). We
simplify the right-hand side of (\ref{eq:07}) in several steps.

Using (\ref{eq:hlam}) for $u=v$, and (\ref{eq:1b}), the right hand
side of (\ref{eq:07}) for $l=E_v$ is
\begin{equation*}
\sum_a\sum_{I\not\ni
v}(-1)^{|I|}\dim\frac{H^0(\linebundle(-a-E_I))}{H^0(\linebundle(-a-E_I-E_v))},
\end{equation*}
where the first sum runs over $a=\sum_{u\not =v}a_uE_u$, with
$0\leq a_u < k$ for some $k\gg 0$.

By combinatorial cancellation, this is
\begin{equation*}
\sum_{I\not\ni
v}(-1)^{|I|}\dim\frac{H^0(\linebundle(-kE_I))}{H^0(\linebundle(-kE_I-E_v))}.
\end{equation*}
But,  the inclusion $H^0(\linebundle(-kE_I-E_v))\hookrightarrow
H^0(\linebundle(-kE_I))$ is an isomorphism  for $I\not=\emptyset$,
$I\not\ni v$,  and $k\gg 0$. In order to prove this, embed both
groups in $H^0(\linebundle)$. Then notice that
$H^0(\linebundle(-l))\setminus \{0\}=\{\gs\in
H^0(\linebundle)\setminus \{0\}\,:\, \mathrm{div}_\linebundle
(\gs)\geq l\}$ for any $l\in L_{\geq 0}$. On the other hand,
since $(\mathrm{div}_\linebundle(\gs)-c_1(\linebundle),E_u)=0$ for
all $u\in\cV$ and $\gs\not=0$, the compactly supported part
$D(\gs)$ of $\mathrm{div}_\linebundle(\gs)$ satisfies $D(\gs)\in
c_1(\linebundle)+\cS'$. But $\cS'$ has the property  that for
all $c_1\in L'$ there is a $k\gg 0$ such that for all 
$D\in c_1+\cS'$ with  $D\geq kE_I$ one has  $D\geq kE_I+E_v$ too.  
Thus (\ref{eq:07}) for $l=E_v$ follows.

Similarly, for (\ref{th:1})(\ref{th:1.2}), one compares the
sheaves $\linebundle$ and $\linebundle(-l)$ for $l\in L_{\geq 0}$.
The cohomology long exact sequence of
$\linebundle(-l)\hookrightarrow \linebundle \twoheadrightarrow
\linebundle_l$ provides
\begin{equation*}
\dim \frac{H^0(\linebundle)}{H^0(\linebundle(-l))}=
\chi(\linebundle_l)-h^1(\linebundle(-l))+h^1(\linebundle).
\end{equation*}
Then, Riemann-Roch formula and a computation shows that the
expression
\begin{equation*}
h^1(\linebundle)+\sum_{a\nleq
0}h_{\linebundle(a)}+\frac{(K-2c_1(\linebundle))^2+|\cV|}{8}
\end{equation*}
is the same for $\linebundle$ and $\linebundle(-l)$.

\subsection{Remarks on the Seiberg-Witten invariant}\label{re:SW}
In the next section we combine (\ref{eq:08}) and the identity from 
(\ref{th:21}) (valid for splice-quotients) to get (\ref{eq:24}).
Assume that this last identity  is true for some singularity and $l'\in
-K+\cS'$ is in the vanishing zone of $h^1(\cO(-l'))$. Then the
vanishing of the right hand side of (\ref{eq:24}) provides a
completely topological identity connecting the Seiberg-Witten
invariants of the link with a combinatorial data of $\Gamma$. In
particular, we get a {\em new combinatorial formula for the
Seiberg-Witten invariants} for all plumbed 3-manifolds which admit
an analytic structure satisfying 
(\ref{eq:24}); e.g. {\em this is the case for all the graphs of
splice-quotients}.

The point is that this topological identity is true for any
$\Gamma$. Indeed, one has:

\begin{theorem}
Fix an arbitrary resolution graph whose associated plumbed 3-manifold is 
a rational homology sphere. Consider all its combinatorial
invariants  as above, and  define  the coefficients $c_{l'}$ by the expansion
\begin{equation}\label{eq:TOP}
\prod_{v\in\cV}
\big(1-\bt^{E_v^*}\big)^{\delta_v-2}=\sum_{l'\in\cS'}c_{l'}\bt^{l'}\in\setZ[[\cS']].
\end{equation}
Take any representative $l'$ of the class $[l']$ in
$l'\in -K+\cS'$. Then the Seiberg-Witten invariants of the
associated plumbed 3-manifold satisfy:
\begin{equation}\label{eq:SWSW}
  \ssw{[-l']*\sigma_{\mathrm{can}}} +
    \smartfrac{(K + 2 l')^2 + |\cV|}{8}= -\sum_{a \in L, a \ngeq 0}  c_{l'+a},
  \end{equation}
In other words, the expression from the right hand side (i.e.
special truncations of the series (\ref{eq:TOP})) admits a
quadratic generalized Hilbert polynomial, whose free coefficient
is the (renormalized) Seiberg-Witten invariant.
\end{theorem}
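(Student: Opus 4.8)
The plan is to split the proof into two cases: resolution graphs $\Gamma$ that support a splice-quotient singularity, where the identity can be assembled from the results already established, and arbitrary negative-definite plumbing trees, where the only available handle on $\ssw{\cdot}$ is its purely combinatorial description.

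First I would treat a $\Gamma$ that is the resolution graph of some splice-quotient $(X,o)$, and fix a good resolution. By the equivariant Campillo--Delgado--Gusein-Zade identity, established for splice-quotients in (\ref{th:21}), the series $\cP(\bt)$ equals the topological zeta function $\prod_{v\in\cV}(1-\bt^{E_v^*})^{\delta_v-2}$; combining this with (\ref{eq:10}) gives $h_{\cO(-l')}=c_{l'}$ for all $l'\in L'$, and since $c_{l'}=0$ for $l'\notin\cS'$ the sum on the right of (\ref{eq:SWSW}) is finite for $l'\in -K+\cS'$ by (\ref{eq:finite}). Now fix $l'\in -K+\cS'$. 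As $h^1(\cO(-l'))=0$ by Laufer's vanishing, the identity (\ref{eq:08}) applied to $\linebundle=\cO(-l')$ reduces to (\ref{eq:KV}), i.e.
\[
\mathrm{const}_{[-l']}=-\sum_{a\in L,\,a\ngeq 0}h_{\cO(-l'-a)}-\frac{(K+2l')^2+|\cV|}{8}.
\]
Substituting $h_{\cO(-l'-a)}=c_{l'+a}$ and invoking the Seiberg--Witten Invariant Conjecture for splice-quotients, $\mathrm{const}_{[-l']}=\ssw{[-l']*\sigma_{\mathrm{can}}}$ (valid by \cite{BN}), yields exactly (\ref{eq:SWSW}).

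The hard part is the general case, where $\Gamma$ need not be a splice-quotient graph at all. Here I would first observe that both sides of (\ref{eq:SWSW}) are topological --- the right-hand side visibly, the left because $\ssw{\cdot}$ is an invariant of the plumbed 3-manifold and $K,|\cV|$ are read off $\Gamma$ --- so it is enough to prove the identity combinatorially. The route of \cite{NSW}, which I would follow, is to show that the truncated coefficient sum $-\sum_{a\in L,\,a\ngeq 0}c_{l'+a}$, as a function of the representative $l'\in -K+\cS'$, is a quadratic quasi-polynomial with leading part $\big((K+2l')^2+|\cV|\big)/8$, and to identify its free term (the \emph{periodic constant} of the series (\ref{eq:TOP})) with $\ssw{[-l']*\sigma_{\mathrm{can}}}$ using the known description of Seiberg--Witten invariants of plumbed rational homology spheres (via the Reidemeister--Turaev sign-refined torsion together with the Casson--Walker invariant). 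This step is the genuine obstacle: a general plumbing tree violating the Neumann--Wahl semigroup conditions carries no splice-quotient, hence no natural line bundles, no Campillo--Delgado--Gusein-Zade formula and no vanishing theorem, so the analytic shortcut of the previous paragraph is unavailable and $\ssw{\cdot}$ must be extracted directly from the graph --- the content of \cite{NSW}.
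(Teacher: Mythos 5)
Your splice-quotient argument is exactly the paper's: it deduces (\ref{eq:SWSW}) from (\ref{cor:QS}), i.e.\ from (\ref{th:21}) together with (\ref{eq:10}) (giving $h_{\cO(-l')}=c_{l'}$), the identity (\ref{eq:08})/(\ref{eq:KV}), the vanishing of $h^1(\cO(-l'))$ for $l'\in -K+\cS'$, and the identification $\mathrm{const}_{h}=\ssw{h*\sigma_{\mathrm{can}}}$ from \cite{BN}; that part is correct. For a general graph the paper, like you, gives no complete proof here --- it is explicitly postponed to \cite{NSW} --- but its sketch differs from yours: instead of proving quasi-polynomiality of the truncated sums and matching the periodic constant against the Reidemeister--Turaev torsion/Casson--Walker description of $\ssw{\cdot}$, the paper observes that the left-hand side of (\ref{eq:SWSW}) obeys surgery identities, so it suffices to establish the same surgery behaviour for the right-hand side; this is then done by induction on the number of `bad' vertices (those with $-E_v^2<\delta_v$), as in \cite{NO2}: replacing $-E_v^2$ by $-E_v^2+n\,|\det(\Gamma)|$ makes the discrepancy between the two sides a rational function of $n$ which vanishes for all $n\gg 0$ by the inductive step (the base case being strings and rational graphs, already covered by the splice-quotient case), hence it vanishes identically. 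The trade-off is that the paper's deformation/surgery route needs no independent topological formula for the Seiberg--Witten invariant beyond its surgery behaviour, taking the already-proved cases as input, whereas your route requires extracting $\ssw{\cdot}$ directly from the torsion and the Casson--Walker invariant; both are consistent with what \cite{NSW} carries out, but be aware that neither your proposal nor the present paper contains the full proof of the general case.
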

The validity of the theorem for splice-quotients (in particular
for rational or star-shaped graphs) follows from (\ref{cor:QS}).
The proof of the general case involves different techniques from
those used in the present article, hence will be published in
another article \cite{NSW}. Here we only sketch the argument.

Notice that the left hand side of the identity satisfies some surgery
identities. Since for `simple' graphs (e.g.
strings, or rational graphs) the identity (\ref{eq:SWSW}) is valid, 
it is enough to
verify that the right hand side of (\ref{eq:SWSW}) satisfies a
similar surgery formula as the left hand side. This (again by the
above discussion, when (\ref{eq:SWSW}) holds)
is true for rational graphs, and one can prove
the general case by induction on the number of `bad' vertices of
the graph  by a similar technique as in \cite{NO2}
($v$ is `bad' if $-E_v^2$ is smaller than the valency
$\delta_v$). Indeed, if
$v$ is a bad vertex, one considers the series of graphs $\Gamma_n$
defined by replacing the self-intersection $-E^2_v$ by
$-E^2_v+n\cdot |\det(\Gamma)|$ ($n\in\setZ_{\geq 0}$). The wished
identity is a rational function in $n$, and it is vanishing (by
the inductive step) for any $n$ sufficiently large. Hence it is
vanishing identically.

\section{The Campillo--Delgado--Gusein-Zade formula for splice-quotients} \label{ss:12}

\subsection{}Splice-quotient singularities were introduced by Neumann
and Wahl in \cite{NWuj2}. From any fixed graph $\Gamma$ (which has
some special properties) one constructs an equisingular family of
singularities whose analytic type is strongly linked with one of
its resolutions whose dual graph is $\Gamma$.

Briefly, the construction of one of the members $(X,o)$ of the
family for a fixed $\Gamma$ is the following.  Let $\cE$ denote
the set of {\em ends}, i.e. the set of those vertices $v$ whose
valency $\delta_v$ is 1. Let $\setC\{z\}$ be the convergent power
series in variables $\{z_i\}_{i\in\cE}$. Then the abelian cover
$(Y,o)$ of $(X,o)$ is a complete intersection in $\setC^{|\cE|}$
with $|\cE|-2$ equations (the so-called `splice diagram
equations'). $H$ acts on $\setC\{z\}$ by $h*z_i=
\theta(E^*_i)(h)z_i$ which induces on $\cO_{Y,o}$ the Galois
action of the abelian cover.

Splice-quotients include the rational singularities (and any
resolution) \cite{Ouac-c},  minimally elliptic singularities (with
resolutions where the support of the minimal elliptic  cycle
 is not proper) \cite{Ouac-c}, and weighted-homogeneous singularities
(with their  minimal good resolutions) \cite{Neu}. For more
details see also \cite{NWuj2} and \cite{Opg}.

\begin{theorem}\label{th:21}
Let $(X,o)$ be a splice-quotient singularity associated with the
fixed graph $\Gamma$. Consider its resolution $\pi$ with graph
$\Gamma$ and the analytic invariants $\cH(\bt)$ and $\cP(\bt)$
associated with $\pi$. Then they can be recovered from $\Gamma$.
More precisely:
\begin{equation}\label{eq:22}
\cP(\bt)=\prod_{v\in\cV} \big(1-\bt^{E_v^*}\big)^{\delta_v-2}.
\end{equation}
(The expression of $\cH(\bt)$ follows from (\ref{eq:22}) and 
(\ref{eq:10b}).)
\end{theorem}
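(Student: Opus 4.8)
The plan is to prove the coefficient-wise identity $h_{\cO(-l')}=c_{l'}$ for all $l'$, where $c_{l'}$ denotes the coefficient of $\bt^{l'}$ in $\prod_{v\in\cV}(1-\bt^{E_v^*})^{\delta_v-2}$ (the coefficients already introduced in (\ref{eq:TOP})): by (\ref{eq:10}) this is exactly (\ref{eq:22}), after which (\ref{eq:10b}) produces $\cH(\bt)$, the $H$-equivariance being automatically carried by $\cP$. I would first rewrite $h_{\cO(-l')}$ in Euler-characteristic form, by regrouping the subsets of $\cV$ in (\ref{eq:1}) and using $\sum_{I\subseteq\cV}(-1)^{|I|}=0$, obtaining
\[
  h_{\cO(-l')}=\sum_{I\subseteq\cV}(-1)^{|I|+1}\,\hh(l'+E_I),
\]
so that the whole statement reduces to a sufficiently explicit, combinatorial control of the dimensions $\hh(m')=\dim\big(\cO_{Y,o}/\cF(m')\big)$ in each $H$-eigenspace, equivalently (via (\ref{eq:05})) of the dimensions $\dim H^0(\widetilde X,\cO(-m'))$ of global sections of natural line bundles on the resolution.

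This is where the splice-quotient hypothesis enters, and I expect it to be the main obstacle. Via (\ref{eq:01}), $H^0(\widetilde X,\cO(-m'))$ is the $\theta([m'])$-eigenspace of the functions on $\widetilde Y$ whose divisor dominates $\widetilde c^*(m')$; by Neumann--Wahl $\cO_{Y,o}=\setC\{z_i:i\in\cE\}/\cI$ is the complete intersection cut out by the $|\cE|-2$ splice-diagram equations, grouped into $\delta_v-2$ equations attached to each node $v$; and by the end-curve property (\cite{NWuj2,Opg,OkumaRat}) each coordinate $z_i$ has divisorial multi-order exactly $E_i^*$, so a monomial $z^{\mathbf a}$ has multi-order $\sum_i a_iE_i^*$ and $H$-weight $\theta(\sum_i a_iE_i^*)$. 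The delicate point is that $\cF$ cannot be read off naively from the monomial exponents: distinct monomials may share the same order along a single $E_v$, so the splice equations can force suitable combinations of monomials to vanish to strictly higher order, and the divisorial filtration --- equivalently $\mathrm{gr}_\cF\cO_{Y,o}$ --- is genuinely finer than the semigroup algebra of the monomial exponents. The substance of the splice-quotient conditions (the Neumann--Wahl semigroup and congruence conditions, exploited as in Okuma's study of splice-quotient abelian covers) is precisely that these ``extra vanishings'' are themselves combinatorial: the relations responsible for them are monomial and attached to the nodes in the predicted way, so that each $\dim H^0(\widetilde X,\cO(-m'))$, hence each $\hh(m')$, becomes a quantity computable from $\Gamma$ alone (for rational singularities this recovers Okuma's picture, in which the graded dimensions are $0$ or $1$). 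Proving this cleanly, for all splice-quotients and with no restriction on the eigenspace, is the heart of the argument.

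Granting it, I would finish by matching two generating functions. Feeding the combinatorial description of $\dim H^0(\widetilde X,\cO(-m'))$ into the Euler-characteristic formula above organizes $h_{\cO(-l')}$ according to the $|\cE|$ variables $z_i$, which contribute the series of the semigroup $\Sigma:=\setZ_{\geq0}\langle E_i^*:i\in\cE\rangle$ --- realized with multiplicity one since the $E_i^*$ form part of the basis $\{E_v^*\}$, i.e.\ a factor $\prod_{i\in\cE}(1-\bt^{E_i^*})^{-1}$ --- together with the $\delta_v-2$ splice relations at each node $v$, which cut this down by $(1-\bt^{E_v^*})^{\delta_v-2}$ (valence-$2$ vertices being inert). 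On the other side, the target product factors as $\prod_{v\in\cV}(1-\bt^{E_v^*})^{\delta_v-2}=\prod_{\text{node }v}(1-\bt^{E_v^*})^{\delta_v-2}\cdot\prod_{i\in\cE}(1-\bt^{E_i^*})^{-1}$, whose expansion is exactly (the series of $\Sigma$) times (a finite inclusion--exclusion over multisets of nodes). Comparing the two expansions coefficient by coefficient is then a purely combinatorial identity, giving $h_{\cO(-l')}=c_{l'}$, and (\ref{eq:10b}) delivers the asserted formula for $\cH(\bt)$.
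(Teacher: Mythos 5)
Your reduction to the coefficientwise identity $h_{\cO(-l')}=c_{l'}$, and the rewriting of $h_{\cO(-l')}$ as an alternating sum of values of $\hh$, are fine; but the proposal stops exactly where the theorem lives. The step you yourself call ``the heart of the argument'' and then take for granted --- that for a splice-quotient the divisorial multi-filtration $\cF$ has the same eigenspace dimensions as the monomial data, i.e.\ that every class in $H^0(\cO(-l'))/H^0(\cO(-l'-l))$ is accounted for by monomials in the end curve sections --- is precisely the content of the key technical Lemma (\ref{lem:35})(\ref{lem:35.1}), whose proof occupies Section \ref{s:PROOF} and is a delicate simultaneous induction on the graph (monomial generation together with the restriction isomorphism (\ref{eq:3.9})). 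Appealing to ``the Neumann--Wahl semigroup and congruence conditions, as in Okuma'' is not a proof; worse, the paper observes in the Remarks following the proof of (\ref{th:21}) that the equality $\phi(\cG(l'))=\cF(l')$ of the monomial and divisorial filtrations is itself a \emph{consequence} of Theorem (\ref{th:21}) (or of Lemma (\ref{lem:35})), so taking it as an input without an independent argument is circular, and you sketch no such argument.

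The concluding generating-function step also hides a genuine subtlety: the splice diagram equations attached to a node $v$ are not homogeneous for the $L'$-multigrading --- their admissible monomials $z^{\alpha_{(k)}}$ have distinct multidegrees $D(\alpha_{(k)})\geq E_v^*$, only the infimum over the branches being $E_v^*$ --- so the claim that the $\delta_v-2$ relations cut the series of $\setC\{z\}$ down exactly by $(1-\bt^{E_v^*})^{\delta_v-2}$ requires an argument about orders of a regular sequence with respect to the filtration $\cG$ (the paper states the analogous fact for $\cP_\cG$ only as a remark, not as part of its proof). The paper's actual route avoids both difficulties: it proves (\ref{eq:23}) by induction on $|\cV|$, deleting an end vertex $v$, using Lemma (\ref{lem:35})(\ref{lem:35.2}) to get $h_{\cB}=h_{\bar{\cB}}-h_{\bar{\cB}(\bar{E}^*_w)}$, and checking that the right-hand side of (\ref{eq:22}) satisfies the same recursion. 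So your plan identifies the correct objects and a plausible alternative strategy, but as written it is a reduction to the unproved key lemma, not a proof.
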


The identity (\ref{eq:22}) is the generalization of the
Campillo--Delgado--Gusein-Zade formula (stated in the terminology
of \cite{CDGb}). This, for rational singularities was proved in
\cite{CDGEq}, for minimally elliptic ones in \cite{CDGb}. This is
a common generalization of them, which includes the weighted
homogeneous singularities as well.

\begin{corollary}\label{cor:QS} Conjecture (\ref{SWIC}) is true
for splice-quotient singularities. Moreover, $h^1(\cO(l'))$ is topological 
for any $l'\in L'$. More precisely, consider the expansion (\ref{eq:TOP}).
Then
\begin{equation}\label{eq:24}
   - h^1 \left( \cO(-l')\right) = \sum_{a \in L, a \ngeq 0}
    c_{l'+a} + \ssw{[-l']*\sigma_{\mathrm{can}}} +
    \smartfrac{(K + 2 l')^2 + |\cV|}{8}.
  \end{equation}
\end{corollary}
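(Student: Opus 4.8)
The plan is to derive Corollary~\ref{cor:QS} by combining the two main inputs already available: the general algebraic identity (\ref{eq:08}) (equivalently (\ref{eq:KV})) from Proposition~\ref{th:1}, and the topological identification of the generating function $\cP$ in Theorem~\ref{th:21}. First I would observe that (\ref{eq:10}) together with (\ref{eq:22}) shows that, for splice-quotients, the coefficients $h_{\cO(-l')}$ are exactly the coefficients $c_{l'}$ of the expansion (\ref{eq:TOP}); that is, $h_{\cO(-l')}=c_{l'}$ for all $l'\in L'$, and both are supported on $\cS'$ by (\ref{eq:1b}). Substituting this into (\ref{eq:08}) applied to $\linebundle=\cO(-l')$ gives
\begin{equation*}
 -h^1(\cO(-l'))=\sum_{a\in L,\,a\ngeq 0} c_{l'+a}+\mathrm{const}_{[-l']}+\frac{(K+2l')^2+|\cV|}{8},
\end{equation*}
so the whole content of (\ref{eq:24}) is reduced to the single equality $\mathrm{const}_{[-l']}=\ssw{[-l']*\sigma_{\mathrm{can}}}$, i.e.\ to verifying Conjecture~\ref{SWIC} for splice-quotients.

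To identify $\mathrm{const}_h$ with the Seiberg--Witten invariant I would use the characterization of $\mathrm{const}_h$ provided in Remark~\ref{re:UJ}(\ref{re:UJ2}): by (\ref{eq:KV}), taking any $l'\in -K+\cS'$ representing the class $-h$ in $H$ (which lies in the vanishing zone $h^1(\cO(-l'))=0$ by Laufer's vanishing \cite[(3.2)]{Laufer72}), one has
\begin{equation*}
 \mathrm{const}_{h}=-\sum_{a\in L,\,a\ngeq 0} c_{l'+a}-\frac{(K+2l')^2+|\cV|}{8}.
\end{equation*}
On the other hand, the right-hand side of this formula—a special truncated sum of the purely topological series (\ref{eq:TOP})—is precisely the quantity that the Theorem of Section~\ref{re:SW} (identity (\ref{eq:SWSW})) equates with $\ssw{h*\sigma_{\mathrm{can}}}$. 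So the argument closes provided one knows (\ref{eq:SWSW}) for splice-quotient graphs. Here I would invoke the reduction sketched in Section~\ref{re:SW}: the statement is already known for rational and star-shaped (in particular weighted-homogeneous) graphs by \cite{CDGEq,CDGb} combined with the Seiberg--Witten Invariant Conjecture verified in those cases (e.g.\ \cite{nemethi02:_seiber_witten,BN}), and the general splice-quotient case follows either directly from the surgery/induction argument on bad vertices indicated there, or—to keep the present article self-contained—by noting that \cite{BN} already establishes the Seiberg--Witten Invariant Conjecture for splice-quotients in the restricted Chern-class range that includes the representatives $l'\in -K+\cS'$ used above, which is exactly what is needed to pin down the finitely many constants $\{\mathrm{const}_h\}_{h\in H}$.

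Finally, having identified all the constants, I would assemble the pieces: for an \emph{arbitrary} $l'\in L'$, formula (\ref{eq:08}) for $\cO(-l')$ now reads exactly as (\ref{eq:24}) with $\mathrm{const}_{[-l']}$ replaced by $\ssw{[-l']*\sigma_{\mathrm{can}}}$, and since every term on the right-hand side is manifestly topological—the $c_{l'+a}$ from $\Gamma$ via (\ref{eq:TOP}), the Seiberg--Witten invariant from the link, and the quadratic term from $K$, $l'$ and $|\cV|$—this proves both that Conjecture~\ref{SWIC} holds and that $h^1(\cO(l'))$ is topological for every $l'$. The main obstacle is the middle step: making rigorous the passage from the algebraically-defined $\mathrm{const}_h$ to the Seiberg--Witten invariant. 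This is precisely the Seiberg--Witten Invariant Conjecture, and the subtlety is that its previously known form carries a restriction on the Chern class; the point to check carefully is that the representatives $l'\in -K+\cS'$ needed to evaluate all $|H|$ constants via (\ref{eq:KV}) do fall within the range where the conjecture is already established (or, alternatively, that the self-contained surgery argument of Section~\ref{re:SW} genuinely covers all splice-quotient graphs), after which the extension to unrestricted Chern class is automatic from (\ref{eq:08}).
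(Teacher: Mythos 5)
Your reduction of (\ref{eq:24}) to the single equality $\mathrm{const}_h=\ssw{h*\sigma_{\mathrm{can}}}$ (by identifying $h_{\cO(-l')}=c_{l'}$ via (\ref{eq:10}) and (\ref{eq:22}) and substituting into (\ref{eq:08})) is exactly how the paper proceeds. The gap is in how you then identify the constants. Your primary route evaluates $\mathrm{const}_{[-l']}$ at representatives $l'\in -K+\cS'$ through (\ref{eq:KV}) and then invokes the identity (\ref{eq:SWSW}) of Section (\ref{re:SW}); but within this paper that is circular: the theorem containing (\ref{eq:SWSW}) is only sketched here, its proof for general graphs is deferred to \cite{NSW}, and the paper explicitly states that its validity for splice-quotient graphs is a \emph{consequence} of Corollary (\ref{cor:QS}) (even the sketched induction bootstraps from cases where (\ref{eq:SWSW}) is already known, i.e.\ from this corollary for rational graphs). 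Your fallback route is also not correct as stated: the restricted range in which \cite[(2.2.4)]{BN} proves the Seiberg--Witten Invariant Conjecture consists of the bundles $\cO(-r_h+l)$ with $l\in L_{\geq 0}$, i.e.\ bundles whose negated Chern class is at most $r_h$; the representatives $l'\in -K+\cS'$ give bundles $\cO(-l')$ with large $-c_1$, which lie at the opposite end and are \emph{not} covered by \cite{BN}. So the middle step, as you set it up, does not close.

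The fix is the paper's (shorter) argument: do not use (\ref{eq:KV}) or Laufer vanishing at all. Specialize (\ref{eq:08}) to $\linebundle=\cO(-r_h+l)$, $l\in L_{\geq 0}$, where the correction sum vanishes, obtaining (\ref{eq:sw}); then compare with \cite[(2.2.4)]{BN}, which is precisely the same identity for precisely these bundles with $\ssw{-h*\sigma_{\mathrm{can}}}$ in place of $\mathrm{const}_{-h}$. One instance per class $h\in H$ pins down all the constants, proving Conjecture (\ref{SWIC}) for splice-quotients; your final assembly step (feeding $c_{l'+a}=h_{\cO(-l'-a)}$ and the identified constants back into (\ref{eq:08}) for arbitrary $l'$) then goes through verbatim and yields (\ref{eq:24}).
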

Indeed, for the first sentence  combine  (\ref{eq:sw})  with
\cite[(2.2.4)]{BN}, which proves the
Seiberg-Witten Invariant Conjecture for splice-quotients
singularities and for line bundles $\cO(-r_h+l)$ with $l\in
L_{\geq 0}$ (i.e. the analogue of the identity (\ref{eq:sw}) with
$\ssw{-h*\sigma_{{\rm can}}}$ instead of ${\rm const}_{-h}$). 
For the second statement  one needs additionally (\ref{eq:22}) too.

In order to prove (\ref{th:21}) we need some preparations.

The author studied splice-quotient singularities in a joint
project with G. Braun; we even prepared a manuscript about some
new results. Nevertheless, the manuscript was split in two
independent parts: Braun's article \cite{Gabor} contains a new
proof of the End Curve Theorem of Neumann and Wahl,  the remaining
part regarding results in connection with the
Campillo--Delgado--Gusein-Zade type identity constitutes the
present section. Inevitably, some overlaps remained, but they
emphasize different aspects.

\subsection{Preparations. Singularities satisfying the `end curve condition'}\label{ECC}
We fix a singularity $(X,o)$ and one of its good resolutions
$\pi:\widetilde{X}\to X$ with graph $\Gamma$. As above, $\cV$
(resp. $\cE$) denotes the set of vertices (resp. ends) of
$\Gamma$.

  Consider an irreducible curve on $\widetilde{X}$ transversal to $E$
  intersecting exactly one $E_v$  ($v\in\cV$) at one point.
  A \emph{cut function} for this curve is a function $f\in \cO_{X,o}$ such that the
  divisor of $f\circ \pi$  is supported on $E$ and the curve.
  The curve is a \emph{cut of $E_v$} if it has a cut function.
\begin{definition}\label{def:31}\cite{NWuj2}
  The \emph{end curve condition} for $\pi$  requires a cut $H_i$
  for {\em each end component} $E_i,\ i\in\cE$. We call such an $H_i$ {\em end curve} and the
corresponding function {\em end curve function}.
\end{definition}

 Recall that any splice-quotient singularity satisfies
the end curve condition by construction (\cite[(7.2)(6)]{NWuj2}):
the $|H|$-powers of the coordinate functions of $\setC^{|\cE|}$
serve as end curve functions.

\begin{bekezd}\label{rem:31}{\em (End curve sections.)} \cite{NWuj2} For a fixed
$v\in\cV$, let $e_v$ be the order of $[E^*_v]$ in $H$. Since ${\rm
Pic}(\widetilde{X})$ has no torsion, if $H_v$ is a cut of $E_v$,
one can take a cut function  $f_v$ such that ${\rm div}(f_v\circ
\pi)=e_v(H_v+E^*_v)$. In particular, $\cO(-E^*_v)=\cO(H_v)$.
\end{bekezd}
Consider now the universal abelian cover of $(X,o)$,  maps $c$ and
$\widetilde{c}$ as in (\ref{FM}), and  a function $f_v$ for some
$v\in \cV$ as in the previous paragraph. Then $f\circ \pi\circ
\widetilde{c}$ \, is an $e_v$-power of some $z_v\in
H^0(\widetilde{Y},\cO_{\widetilde{Y}})$ ($\simeq\cO_{Y,o}$). Since
$z_v$ is in the $\theta(E^*_v)$-eigenspace, by (\ref{eq:01}) and a
divisor verification  $z_v\in H^0(\widetilde{X}, \cO(-E^*_v))$,
where its divisor is $H_v$.

In this construction of $z_v$ from $f_v$ there is an ambiguity
with multiplication by an $e_v$-root of unity what we will
disregard.

If $\pi:\widetilde{X}\to X$  satisfies the {\em end curve
condition}, the sections  $z_i$ for $i\in \cE$ constructed from (a
fixed set of) end curve functions will be  called {\em end curve
sections}.

Usually, for each $i\in \cE$, one fixes {\em one} cut.
Nevertheless, in order to run the theory properly, for a graph
which has {\em only one vertex}, one fixes {\em two} disjoint
cuts, hence one gets two end curve sections. (For this special
case, the reader is invited to replace some of the indices used
below accordingly.)

\begin{bekezd}\label{ss:mon}{\em (Monomials in
$H^0(\widetilde{X},\cO(-l'))$.)} Fix a subset $\cW\subset \cV$,
and assume that $\pi$ admits cut functions and cuts $H_w$ for any
$w\in \cW$. Fix $l'\in L'$. For any collection $\alpha_w\in
\setZ_{\geq 0}$ ($w\in \cW$), with $[\sum_w\alpha_wE^*_w]=[l']$ in
$H$, consider the monomial $z^\alpha=\prod_{w\in
\cW}z_w^{\alpha_w}$ in $\{z_w\}_w$ as a meromorphic function on
$\widetilde{Y}$. It is in the $\theta([l'])$-eigenspace, hence it
is a meromorphic section of $\cO(-l')$. Since its divisor ${\rm
div}_{\cO(-l')}(z^\alpha)$ is $\sum_w\alpha_w(H_w+E^*_w)-l'$,
$z^\alpha$ is a global holomorphic section of $\cO(-l')$ if and
only if $\sum_w\alpha_wE_w^*-l'\in L_{\geq 0}$.\end{bekezd}
\begin{definition}\label{def:3.1} Assume that $\pi$ satisfies the end
curve condition. Consider the end curve sections
$\{z_i\}_{i\in\cE}$ associated with a fixed set of end curve
functions. An element of $H^0(\widetilde{X},\cO(-l'))$ of the form
$z^\alpha$ with $\cW=\cE$ is called {\em monomial section}.

A global sections $z^\alpha$ of $\cO(-l')$, where
$\{\alpha_v\}_{v\in \cW}$ is indexed by a subset
$\cW\supsetneqq\cE$ (provided that each $\{z_w\}_{w\in\cW}$
exists) is called an {\em extended monomial section}.

To any collection $\{\alpha_i\}_{i\in \cE}$
($\alpha_i\in\setZ_{\geq 0}$)  we associate its {\em monomial
cycle} $D(\alpha):=\sum_{i\in\cE}\alpha_iE^*_i\in L'$.
\end{definition}

\begin{example}\label{ex:31}
For any $l'\in L'$ and $l\in L_{\geq 0}$, the classes of non-zero
monomial sections in $H^0(\widetilde{X},\cO(-l'))/
H^0(\widetilde{X},\cO(-l'-l))$ are indexed by
\begin{equation*}\label{eq:3.1}
\{\alpha\,|\, \alpha_i\geq 0\ \ \mbox{for all $i\in\cE$}; \ a\geq
0, \ a\not\geq l, \ \mbox{where $a=D(\alpha)-l'\in L$}\}.
\end{equation*}
E.g., if $l=E_v$ for some $v\in\cV$ with $(l',E_v)>0$, then this
index set is empty: the existence of such an $\alpha$ would imply
$0>(D(\alpha)-l',E_v)\geq 0$. This chimes in with the fact that
$H^0(\cO(-l'))/ H^0(\cO(-l'-E_v))$ embeds into
$H^0(E_v,\cO(-l'))=0$.
\end{example}

\begin{bekezd}\label{ss:res}{\em (Restrictions.)} Next we introduce the notations
of the  inductive step used in the proof. Let $\pi$ be a fixed
resolution with dual graph $\Gamma$ and $|\cV|>1$.  We fix a
vertex $v\in\cE$; let $E_w$ be the unique irreducible exceptional
curve which intersects $E_v$. Set $\bar{E}=\cup _{u\in\cV\setminus
v}E_u$.  Let $\wxb$ be a sufficiently small neighbourhood of
$\bar{E}$ in $\widetilde{X}$, and $\bar{\Gamma}$ the  dual graph
of $\bar{E}$. We denote by $\bar{E}_u$ ($=E_u$), $u\in\cV\setminus
v$,  the base elements of the new lattice $\bar{L}$, and by
$\bar{E}^*_u$ their $\bar{\Gamma}$-duals in $\bar{L}'$. Notice
that $\bar{E}$ analytically can be contracted (denote this by
$\bar{\pi}$) giving rise to a singularity $(\bar{X},o)$. Set
$\bar{\cE}$ for the ends of $\bar{\Gamma}$ and $\cE'=\cE\setminus
v$.

If $(X,o)$ admits the end curves $\{H_i\}_{i\in\cE}$ in
$\widetilde{X}$ cut out by the end curve functions
$\{f_i\}_{i\in\cE}$,  then   $\wxb$ inherits some compatible cuts
and end curves. Indeed:
\end{bekezd}
\begin{lemma}\label{lem:31} \cite[(2.15)]{Opg}.
The curves $\{H_i\}_{i\in \cE'}$ and $\bar{H}_w=E_v\cap \wxb$ are
cuts of $\bar{E}$ in $\wxb$. In particular, the resolution
$\bar{\pi}$ of $(\bar{X},o)$ satisfies the end curve condition
with distinguished end curve functions inherited from
$\{f_i\}_{i\in\cE}$.
\end{lemma}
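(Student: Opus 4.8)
The plan is to verify directly that each of the listed curves is a cut of $\bar E$ in $\wxb$, i.e. that each admits a cut function on $(\bar X,o)$ whose divisor (after $\bar\pi$) is supported on $\bar E$ together with the transversal curve. First I would dispose of the curves $H_i$ for $i\in\cE'=\cE\setminus v$. These are already end curves of $E_{\bar E_i}$ in $\widetilde X$, cut out by $f_i\in\cO_{X,o}$ with $\operatorname{div}(f_i\circ\pi)=e_i(H_i+E_i^*)$ by \ref{rem:31}. The point is that restricting $f_i$ to the neighbourhood $\wxb$ of $\bar E$ still gives a holomorphic function whose divisor is supported on $\bar E\cup H_i$: indeed $E_i^*$ has all entries strictly positive, so the compactly-supported part of $\operatorname{div}(f_i\circ\pi)$, when intersected with $\wxb$, is the cycle $e_i\,\bar E^*_{\,i}$ computed in the smaller graph $\bar\Gamma$ (the $E_v$-coefficient simply drops away, since $E_v\not\subset\bar E$); one checks $(\bar E^*_{\,i}, \bar E_u)$ against $(E_i^*,E_u)$ for $u\neq v$ and sees they agree up to the contribution of the removed leaf, which is why $\bar H_w=E_v\cap\wxb$ enters the picture. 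So $f_i|_{\wxb}$, pushed down by $\bar\pi$, is a cut function for $H_i$ on $(\bar X,o)$.

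The remaining and genuinely new curve is $\bar H_w=E_v\cap\wxb$, which is a transversal slice to the component $\bar E_w$ (the unique neighbour of the removed end $E_v$). Here I would use the end curve function $f_v$ of the removed end: $f_v\in\cO_{X,o}$ has $\operatorname{div}(f_v\circ\pi)=e_v(H_v+E_v^*)$. Write $E_v^*$ in the big lattice in terms of the $E_u$'s; removing the leaf $v$ changes $E_v^*$ into a cycle whose non-$E_v$ part is (a multiple of) $\bar E_w$ plus a correction supported on $\bar E$, because the dual $E_v^*$ is, up to the diagonal term, concentrated along the path from $v$. Concretely $\operatorname{div}(f_v\circ\pi)$ restricted to $\wxb$ is supported on $\bar E$ together with the curve $E_v\cap\wxb=\bar H_w$, and it meets $\bar E$ only along $\bar E_w$; this exhibits $f_v|_{\wxb}$ (pushed down) as a cut function for $\bar H_w$, so $\bar H_w$ is a cut of $\bar E_w$.

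Having produced cut functions for $\bar H_w$ and for each $H_i$, $i\in\cE'$, the final step is to identify them with the ends of $\bar\Gamma$: the set $\bar\cE$ of ends of the contracted graph is exactly $\cE'\cup\{w'\}$ where $w'$ is the (possibly new) end created by deleting the leaf $v$ — either $w$ itself becomes an end, in which case $\bar H_w$ is the required end curve for it, or $w$ retains valency $\geq 2$ and then $\cE'$ already exhausts $\bar\cE$ and $\bar H_w$ is an honest (non-end) cut. In both cases every end of $\bar\Gamma$ has been supplied a cut function inherited from the $f_i$'s, which is precisely the end curve condition for $\bar\pi$, with the claimed distinguished end curve functions. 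I expect the main obstacle to be purely bookkeeping: checking that under contraction of the leaf $E_v$ the divisor of $f_i\circ\pi$ (resp. $f_v\circ\pi$) restricted to $\wxb$ really is supported only on $\bar E$ and the transversal curve, with the right intersection pattern — i.e. that no spurious components appear and the cycle-theoretic identities $(E_u^*,E_{u'})$ vs. $(\bar E_u^*,\bar E_{u'})$ behave as expected. This is exactly the content cited from \cite[(2.15)]{Opg}, so I would lean on that reference for the detailed verification rather than re-deriving it.
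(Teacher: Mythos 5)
Your treatment of the new curve $\bar{H}_w$ is the same as the paper's: $f_v|\wxb$ works as its cut function, since $H_v$ misses the small neighbourhood $\wxb$ and the restriction of $\operatorname{div}(f_v\circ\pi)=e_v(H_v+E_v^*)$ is then supported on $\bar{E}\cup\bar{H}_w$. The final remarks about which vertices of $\bar{\Gamma}$ are ends are also fine.

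However, the first half of your argument contains a genuine error: the plain restriction $f_i|\wxb$ is \emph{not} a cut function for $H_i$ ($i\in\cE'$). Its divisor is the restriction of $e_i(H_i+E_i^*)$, and since every entry of $E_i^*$ is strictly positive, $f_i$ vanishes along $E_v$ to some order $m_i>0$; inside $\wxb$ the piece $E_v\cap\wxb=\bar{H}_w$ is no longer exceptional, so $\operatorname{div}(f_i|\wxb\circ\bar{\pi})$ is supported on $\bar{E}\cup H_i\cup\bar{H}_w$, i.e.\ it carries a second transversal component with positive multiplicity $m_i$, which the definition of a cut function forbids. The claim that ``the $E_v$-coefficient simply drops away, since $E_v\not\subset\bar{E}$'' is exactly where this goes wrong — you even note that $\bar{H}_w$ ``enters the picture'', but then do nothing to remove it. This is the essential point of the lemma, not bookkeeping to be delegated to \cite[(2.15)]{Opg}: the paper's proof fixes it in one line by replacing $f_i$ with the quotient $\bar{f}_i=f_i^{m_v}/f_v^{m_i}|\wxb$, where $m_i$ (resp.\ $m_v$) is the vanishing order of $f_i$ (resp.\ $f_v$) along $E_v$. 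This cancels the multiplicity along $E_v$, hence along $\bar{H}_w$; the resulting divisor on $\wxb$ is supported on $\bar{E}\cup H_i$ (one also checks it stays effective, e.g.\ because the corresponding cycle is anti-nef on $\bar{\Gamma}$ with zero $E_v$-coefficient), so $\bar{f}_i$ is the inherited cut function for $H_i$. Your proposal as written produces no valid cut functions for the old ends and therefore does not establish the end curve condition for $\bar{\pi}$.
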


\begin{proof}
Let $m_i$ be the vanishing order of $f_i$ along $E_v$. Then for
$\bar{\pi}$,  $f_v|\wxb$ is a cut function for $\bar{H}_w$, and
$\bar{f}_i=f^{m_v}_i/f^{m_i}_v|\wxb$ is a cut function for $H_i$,
$i\in \cE'$.
\end{proof}

Let $i:\bar{L}\hookrightarrow L$ be the natural lattice embedding
$\bar{E}_u\mapsto E_u$, $u\in\cV\setminus v$. Its dual  $R:L'\to
\bar{L}'$ is defined by $(R(l'),\bar{l})=(l',i(\bar{l}))$ (or by
$R(E^*_v)=0$ and $R(E^*_u)=\bar{E}^*_u$ for $u\in \cV\setminus
v$), and one also has
\begin{equation}\label{eq:3.2}
R(E_v)=-\bar{E}^*_w \ \ \mbox{and} \ \ R(E_u)=\bar{E}_u \ \ \mbox{for $u\in\cV\setminus v$}.
\end{equation}

\begin{lemma}\label{lem:32} The restriction of any natural line
bundle to $\wxb$ is natural. In fact, the restriction of
$\cO_{\widetilde{X}}(-l')$ is isomorphic to $\cO_{\wxb}(-R(l'))$.
\end{lemma}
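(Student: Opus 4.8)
The plan is to exploit the splitting of the exponential sequence recalled in (\ref{FM}): a natural line bundle is, by definition, the image $\cO_{\widetilde{X}}(-l')$ of a class $-l'\in L'$ under the canonical section $\cO$, and restriction is compatible with this sequence. Concretely, restriction to $\wxb$ induces a map ${\rm Pic}(\widetilde{X})\to{\rm Pic}(\wxb)$, and I want to show it carries the section $\cO$ over $L'$ to the section $\bar\cO$ over $\bar L'$ via the dual map $R\colon L'\to\bar L'$. It therefore suffices to check commutativity of the square relating $c_1$ on $\widetilde{X}$, restriction, $c_1$ on $\wxb$, and $R$; then uniqueness of the canonical section (it is characterized as \emph{the} extension to $L'$ of the tautological section on the sublattice $L$, which is dense in the relevant sense) forces the restriction of the natural section to coincide with the natural section on $\wxb$ composed with $R$. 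Since natural bundles are defined as the image of that section, naturality of the restriction follows, and the explicit identification reads $\cO_{\widetilde{X}}(-l')|_{\wxb}\simeq\cO_{\wxb}(-R(l'))$.

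First I would treat the generators $l'=E_u^*$ separately, since $R$ is already described on them in the excerpt ($R(E_v^*)=0$, $R(E_u^*)=\bar E_u^*$ for $u\in\cV\setminus v$). For $u\in\cV\setminus v$: $\cO_{\widetilde{X}}(-E_u^*)$ is, by the end-curve-section discussion (\ref{rem:31}), locally $\cO(H_u)$ away from contributions of $E_v$; restricting to a neighbourhood of $\bar E$ kills whatever part of the dual cycle is supported on $E_v$, and by Lemma (\ref{lem:31}) the cut $H_u$ (for $u\in\cE'$, or more generally a cut of $E_u$) persists in $\wxb$, identifying the restriction with $\cO_{\wxb}(-\bar E_u^*)$. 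For $u=v$ itself: $E_v$ meets $\bar E$ only along $E_w$, so $\cO_{\widetilde{X}}(-E_v^*)|_{\wxb}$ has first Chern class pairing $0$ with every $\bar E_u$, $u\neq w$, and $-1$ with... — rather than push the divisor bookkeeping here, I would argue more cleanly via $c_1$: restriction sends $c_1(\cO_{\widetilde{X}}(-l'))=-l'\in L'$ to its image in $H^2(\wxb,\setZ)\cong\bar L'$, and this image map is exactly $R$ because for $\bar l\in\bar L$ the pairing $(l',i(\bar l))$ computed in $\widetilde{X}$ equals the pairing of the restricted class with $\bar l$ computed in $\wxb$ (curves supported on $\bar E$ see only the part of $l'$ that survives restriction). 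So $c_1$ of the restriction is $-R(l')$.

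Then the main step is uniqueness: both $l'\mapsto\cO_{\widetilde{X}}(-l')|_{\wxb}$ and $l'\mapsto\cO_{\wxb}(-R(l'))$ are group homomorphisms $L'\to{\rm Pic}(\wxb)$ lifting $l'\mapsto -R(l')\in\bar L'$ along $c_1$; on the sublattice $L$ both agree with the tautological assignment (a genuine curve class restricts to its image curve class, and $R|_L$ is the geometric map $E_v\mapsto -\bar E_w^*$, $E_u\mapsto\bar E_u$ of (\ref{eq:3.2})), and two lifts of the same map that agree on $L$ differ by a homomorphism $L'/L=H\to H^1(\wxb,\cO_{\wxb})$, which is zero since $H$ is finite and $H^1(\wxb,\cO)$ is a $\setC$-vector space, hence torsion-free. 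Therefore the two homomorphisms coincide on all of $L'$, which is the assertion.

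\textbf{Expected main obstacle.} The delicate point is verifying that restriction to the neighbourhood $\wxb$ really does realize the dual lattice map $R$ at the level of $c_1$ and of divisors — in particular handling the generator $E_v^*$, whose dual cycle has support meeting $\bar E$, and making sure the "part supported on $E_v$ disappears" argument is precise rather than heuristic; once that compatibility with $R$ is nailed down, the uniqueness/torsion-freeness argument is routine.
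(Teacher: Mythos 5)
Your overall strategy — check that restriction and the natural section are homomorphisms lifting $-R$ along $c_1$, verify agreement on the sublattice $L$, and kill the difference using finiteness of $H$ and torsion-freeness of $H^1(\wxb,\cO_{\wxb})$ — is in substance the paper's own proof (the paper passes to the $|H|$-th power $\cO_{\widetilde{X}}(-l')^{|H|}$ and checks the generators $E_u$ of $L$). But there is a genuine gap at exactly the point you flagged as the ``expected main obstacle'' and then did not actually resolve: the agreement on $L$ at the generator $E_v$. The restriction of $\cO_{\widetilde{X}}(E_v)$ to $\wxb$ is $\cO_{\wxb}(\bar{H}_w)$, where $\bar{H}_w=E_v\cap\wxb$ is a transversal curve, \emph{not} a cycle supported on $\bar{E}$; so this restriction is not tautologically natural, and (\ref{eq:3.2}) only records the lattice-level identity $R(E_v)=-\bar{E}^*_w$, i.e.\ that the two bundles $\cO_{\wxb}(\bar{H}_w)$ and $\cO_{\wxb}(-\bar{E}^*_w)$ have the same Chern class in $\bar{L}'$. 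Their difference lies in $\ker c_1=H^1(\wxb,\cO_{\wxb})$, a vector space which is nonzero in general, and nothing in your argument forces it to vanish. The identity $\cO_{\wxb}(\bar{H}_w)\simeq\cO_{\wxb}(-\bar{E}^*_w)$ is precisely the statement that $\bar{H}_w$ is a cut of $\bar{E}_w$ with a cut function, and this is where the end curve condition enters: Lemma (\ref{lem:31}) provides the cut function $f_v|\wxb$ for $\bar{H}_w$ (inherited from the end curve function at the end $v$), and (\ref{rem:31}) then gives $\cO_{\wxb}(\bar{H}_w)=\cO_{\wxb}(-\bar{E}^*_w)$, again using torsion-freeness. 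Your phrase ``a genuine curve class restricts to its image curve class'' silently replaces the actual restricted divisor $\bar{H}_w$ by the rational cycle $-\bar{E}^*_w$; that replacement is the whole content of the lemma at this generator, and it is an analytic input, not a consequence of the lattice combinatorics or of the $c_1$ computation you carry out.

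Once that input is inserted your argument closes and coincides with the paper's proof: agreement on $L$ holds tautologically for the generators $E_u$, $u\in\cV\setminus v$ (they are divisors inside $\wxb$), and for $E_v$ by (\ref{lem:31}) together with (\ref{rem:31}); then the difference homomorphism $L'/L=H\to H^1(\wxb,\cO_{\wxb})$ vanishes since $H$ is finite and $H^1(\cO_{\wxb})$ is torsion-free. A minor further point: your discussion of the generators $E_u^*$ via cuts $H_u$ is only available for $u\in\cE'$ — interior vertices need not admit cuts — so it is both cleaner and necessary to argue on the $L$-generators $E_u$, where no cut is needed except at $u=v$, exactly as the paper does.
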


\begin{proof}
We need to show that some power of the restriction has the form
$\cO_{\wxb}(\bar{l})$ for some $\bar{l}\in\bar{L}$. This, by
taking $\cO_{\widetilde{X}}(-l')^{|H|}$, reduces the proof to the
case $\cO_{\widetilde{X}}(E_u)|\wxb$. But this, for $u\not=v$ is
$\cO_{\wxb}(\bar{E}_u)$, and for $u=v$ it is
$\cO_{\wxb}(\bar{H}_w)$, which equals $\cO_{\wxb}(-\bar{E}^*_w)$
by  (\ref{lem:31}) and  (\ref{rem:31}); all of them are natural.
\end{proof}
%
%
%
Here, a word of warning is necessary.  Consider the restriction
map
\begin{equation*}
R^0:H^0(\widetilde{X},\cO_{\widetilde{X}}(-l'))\longrightarrow
H^0(\widetilde{X}(\bar{E}),\cO_{\widetilde{X}}(-l')|_{\widetilde{X}(\bar{E})}).
\end{equation*}
Although $\cO_{\widetilde{X}}(-l')|_{\widetilde{X}(\bar{E})}$ and
$\cO_{\widetilde{X}(\bar{E})}(-R(l'))$ are isomorphic, the
isomorphism connecting them might not send the restriction of a
monomial section of $\cO_{\widetilde{X}}(-l')$ to a monomial
section of $\cO_{\widetilde{X}(\bar{E})}(-R(l'))$.

\subsection{}\label{ss:ktl} In the proof of (\ref{th:21})
we will also use the following lemma. We separate its proof in \S
\,\ref{s:PROOF}.

\begin{lemma}\label{lem:35} \cite{Gabor}
Let $\pi:\widetilde{X}\to X$ be the resolution  which satisfies
the {\em end curve condition}. Let $\linebundle
=\cO_{\widetilde{X}}(-l')$ ($l'\in L'$) be a natural line bundle
on $\widetilde{X}$. Then:
\begin{enumerate}
 \item\label{lem:35.1} For some \(l \in L_{{} \geq 0}\)
    consider the vector space
    \(V:=\smartfrac{H^0(\widetilde{X},
    \linebundle)}{H^0(\widetilde{X},\linebundle( - l ))}\).
    Assume that a subset of $H^0(\widetilde{X},
    \linebundle)$ has the following property:
     for any non-zero class in $V$ of a monomial section of
     $\linebundle$ the set contains an element with the same divisor.
     Then the classes of the elements of the set generate $V$ as a vector space.

  \item\label{lem:35.2} Assume that $|\cV|>1$,  \(v\in\cE\) and
    \(( c_1(\linebundle ),E_v)\geq 0\).
    Set $E_w$ and $\wxb$ as in (\ref{ss:res}). Then the restriction map
    induces an isomorphism
    \begin{equation}
      \label{eq:3.9}R^\flat:
      \frac{H^0( \widetilde{X},\linebundle)}{H^0(\widetilde{X},
      \linebundle( - E_w ))} \longrightarrow
      \frac{H^0( \wxb,\linebundle|_{\wxb})}{H^0(\wxb,\linebundle |_{\wxb}( - E_w ))}.
    \end{equation}
\end{enumerate}
\end{lemma}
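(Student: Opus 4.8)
\textbf{Plan of proof for Lemma \ref{lem:35}.}

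For part (\ref{lem:35.1}), the plan is to argue by ``divisor decomposition'' of the vector space $V$. First I would observe that any non-zero $\gs\in H^0(\widetilde X,\linebundle)$ has a well-defined divisor $\mathrm{div}_\linebundle(\gs)$, whose compactly supported part $D(\gs)$ lies in $c_1(\linebundle)+\cS'$ (as used in the proof of (\ref{th:1.1})); and the class of $\gs$ in $V$ is non-zero precisely when $D(\gs)\ngeq l$. Filtering $V$ by the ($\cS'$-indexed, hence by (\ref{eq:finite}) finitely supported modulo $l$) values of $D(\gs)$, it suffices to show that the monomial sections whose divisor equals a fixed admissible cycle already span the corresponding graded piece. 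This is where the splice-quotient (or end-curve) structure enters: pulling back to the universal abelian cover $\widetilde Y$ via $\widetilde c$, a section of $\cO(-l')$ with $D(\gs)$ prescribed becomes a meromorphic function on $\widetilde Y$ in the appropriate $H$-eigenspace with prescribed divisor, and the end-curve sections $z_i$ generate enough of the relevant eigenspaces (by the splice-diagram-equation description of $\cO_{Y,o}$, cf. (\ref{ss:mon})) that every such section is, modulo lower terms, a linear combination of monomials $z^\alpha$ with the same divisor. I would make this precise by downward induction on the $\cS'$-order of $D(\gs)$: given $\gs$, pick a monomial section $z^\alpha$ with the same divisor, subtract a suitable scalar multiple, and note the difference has strictly larger divisor, so it lies in a lower piece of the filtration handled by the inductive hypothesis. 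The hypothesis on the set --- that it contains, for each non-zero monomial class, an element with the same divisor --- then lets us replace each $z^\alpha$ by a set element, proving the spanning statement.

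For part (\ref{lem:35.2}), the plan is first to identify both sides concretely. By Lemma \ref{lem:32}, $\linebundle|_{\wxb}\cong\cO_{\wxb}(-R(l'))$, and $E_w$ here should be read as $R(E_v)=-\bar E^*_w$ (via (\ref{eq:3.2})), i.e. $\linebundle|_{\wxb}(-E_w)$ is $\cO_{\wxb}(-R(l')-\bar E^*_w)$ in the $\bar X$-lattice. Both the source and target of $R^\flat$ are quotients ``$H^0/H^0(-E_w)$'', which by the embedding into $H^0(E_w,\cdot)$ used in the proof of (\ref{th:1.1}) are identified with subspaces of $H^0(E_w,\linebundle|_{E_w})$, and $E_w\cap\widetilde X=E_w\cap\wxb$ is literally the \emph{same} curve in both ambient spaces. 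So both sides sit inside the \emph{same} finite-dimensional space $H^0(E_w,\linebundle|_{E_w})$ and $R^\flat$ is the identity on those subspaces; the content is that the two subspaces coincide, i.e. that a section of $\linebundle$ on $\widetilde X$ restricts to the ``right'' subspace of sections along $E_w$ and, conversely, every section on $\wxb$ extends across the collar near $E_v$. Injectivity of $R^\flat$ is then immediate (a section of $\linebundle$ on $\widetilde X$ vanishing on $\wxb$ in particular vanishes on $E_w$, so it lies in $H^0(\widetilde X,\linebundle(-E_w))$). For surjectivity, the key point is the hypothesis $(c_1(\linebundle),E_v)\ge 0$ with $v\in\cE$: since $E_v$ is an end, it meets only $E_w$, and a local analysis near $E_v$ (or, equivalently, the Mayer--Vietoris/long-exact-sequence comparison of $H^0$ over $\widetilde X=\wxb\cup(\text{collar of }E_v)$) shows that the obstruction to extending a section from $\wxb$ across the $E_v$-collar lies in $H^1$ of a line bundle of non-negative degree on the rational curve $E_v$ restricted appropriately --- which vanishes. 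I would spell this out using the exact sequence
\begin{equation*}
0\to\linebundle(-E_v-mE_w)\to\linebundle(-mE_w)\to\linebundle|_{E_v}(-mE_w|_{E_v})\to 0
\end{equation*}
for suitable $m$, together with $(c_1(\linebundle),E_v)\ge 0$ and the fact that any section on $\wxb$ prescribes the required data along $E_w$.

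\textbf{Main obstacle.} The routine bookkeeping (identifying the two quotient spaces inside $H^0(E_w,\cdot)$, matching $E_w$ versus $R(E_v)$, checking injectivity) is harmless. The genuine content of (\ref{lem:35.1}) is the claim that monomials in the end-curve sections $z_i$ generate each graded (by divisor) piece of $V$ --- this is exactly where one must invoke the complete-intersection/splice-diagram structure of the abelian cover (this is why the lemma is attributed to \cite{Gabor} and its proof is deferred to \S\,\ref{s:PROOF}), and getting the right normalization of the $z_i$ and the eigenspace arithmetic in $H$ correct is the delicate part. For (\ref{lem:35.2}) the only real subtlety is confirming that the $E_v$-collar contributes no obstruction when $(c_1(\linebundle),E_v)\ge 0$; this is where the hypothesis ``$v\in\cE$'' is used in an essential way and must not be dropped.
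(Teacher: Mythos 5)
Your argument for (\ref{lem:35.1}) assumes essentially what has to be proved. The claim that, via ``the splice-diagram-equation description of $\cO_{Y,o}$'', every section of $\cO(-l')$ is, modulo lower terms, a combination of monomials $z^\alpha$ with the same divisor is not an available input: the lemma is stated and used for singularities satisfying only the end curve condition, where the complete-intersection description of the abelian cover is the End Curve Theorem (which the paper deliberately keeps independent of this lemma); and even for genuine splice-quotients, the statement that the monomial filtration surjects onto the divisorial filtration, $\phi(\cG(l'))=\cF(l')$, is recorded in the paper as a \emph{consequence} of (\ref{lem:35.1})/(\ref{th:21}), not as something known beforehand --- cancellations among monomials forced by the splice equations are exactly the difficulty. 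Your inductive mechanism also breaks down: an arbitrary section need not have the same divisor as any monomial (its divisor can involve curves other than the end curves $H_i$), and two sections with equal divisors differ by a nowhere-vanishing holomorphic function on the non-compact $\widetilde X$, not by a constant, so ``subtract a suitable scalar multiple and the divisor strictly increases'' is false. The paper instead proves the divisor-sensitive spanning statement by induction on $|\cV|$, with an explicit one-vertex base case (monomial sections have pairwise distinct vanishing orders at $H_1\cap E$, giving linear independence) and a reduction of general $l\in L_{\geq 0}$ to the case $l=E_w$ via a Laufer-type computation sequence.

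For (\ref{lem:35.2}) your argument uses only that $v\in\cE$, $E_v\simeq \setP^1$ and $(c_1(\linebundle),E_v)\geq 0$, and never the end curve condition, so it cannot work in that generality: if surjectivity of $R^\flat$ were such a soft cohomological fact, the recursion of (\ref{ss:prth2.1}) would yield the identity (\ref{eq:22}) for \emph{every} normal surface singularity (the one-vertex base case is rational), contradicting the counterexamples mentioned in (\ref{TM}). Concretely, the obstruction to correcting a section given only near $\bar E$ so that it extends over a neighbourhood of $E_v$ does not live in $H^1$ of a line bundle on the curve $E_v$: your sequence $0\to\linebundle(-E_v-mE_w)\to\linebundle(-mE_w)\to\linebundle|_{E_v}(\cdot)\to 0$ is a sequence of sheaves on $\widetilde X$, whose long exact sequence compares sections over $\widetilde X$ with data on $E_v$; it says nothing about sections defined only on the open set $\wxb$, and the relevant $H^1$'s over $\widetilde X$ do not vanish in general. (A smaller slip: $\linebundle|_{\wxb}(-E_w)$ is $\cO_{\wxb}(-R(l')-\bar E_w)$, a twist by the curve $E_w$ itself, not by $\bar E^*_w$; the identity $R(E_v)=-\bar E^*_w$ is used elsewhere.) The paper obtains surjectivity analytically: using $(c_1(\linebundle),E_v)\geq 0$ it shows that every monomial section of $\linebundle|_{\wxb}$ with non-zero class in the target has the same divisor as the restriction $R^0(z^\alpha)$ of a monomial section of $\linebundle$ on $\widetilde X$, and then the inductive hypothesis (\ref{lem:35.1}) for the smaller graph $\bar\Gamma$ --- this is why the two parts are proved by a simultaneous induction on $|\cV|$, which your proposal does not have --- guarantees that these restrictions generate the target.
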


\subsection{Proof of (\ref{th:21})}\label{ss:prth2.1}
Notice that by the  `change of variables' $\{x_v=\bt^{E^*_v}\}_v$,
(\ref{th:21}) is equivalent to
\begin{equation}\label{eq:23}
\sum _{k_v\geq 0} h_{\cO(-\sum_v k_v E_v^*)}\prod_{v\in \cV}\,
x_v^{k_v}=\prod_{v\in \cV} (1-x_v)^{\delta_v-2}.
\end{equation}
Next we prove (\ref{eq:23}) for all  singularities which satisfies
the end curve condition. Since splice-quotients satisfies this the
result follows.

The proof is by induction on the number of vertices (where
we will also use that  the restriction preserves the class of
singularities satisfying the end curve condition, and also their
natural line bundles, cf. (\ref{lem:31}) and (\ref{lem:32})).  The
statement is clear for graphs having only one vertex \(v\), since
it is just a reformulation of \(\dim \smartfrac{H^0 \left(
\bundlefromdivisor{- k
    E_v^*} \right)}{H^0 \left( \bundlefromdivisor{- k
    E_v^* - E_v} \right)} =h^0(\cO_{\setP^1}(k))= k + 1\) for all  \(k\geq 0\).

For a graph \(\Gamma\) with $|\cV|>1$, let $v$, $w$,
$\bar{\Gamma}$ be as in (\ref{ss:res}), and denote the restriction
of $\cB=\cO(-\sum_uk_uE^*_u)\in {\rm Pic}(\widetilde{X})$ to
$\wxb$ by $\bar{\cB}$. Then consider (\ref{eq:hlam}) for $u=w$.
Since $(c_1(\cB),E_v)=k_v\geq 0$ and $-(E_I,E_v)\geq 0$,
(\ref{lem:35})(\ref{lem:35.2}) can be applied for each
$\linebundle=\cB(-E_I)$. Moreover, separating the cases when $I$
contains $v$ or not, we get (cf. (\ref{eq:3.2}))
\begin{equation*}
  h_{\cB} =
h_{\bar{\cB}}   - h_{\overline{\twistedbundle{\cB}{- E_v}}}=
h_{\bar{\cB}}   - h_{\bar{\cB}( \bar{E}^*_w)}.
\end{equation*}
Notice that, by (\ref{eq:1b}), $h_{\bar{\cB}(\bar{E}^*_w)}=0$
whenever $k_w=0$. Therefore, if $\cP_{X,o}$ denotes the left hand
side of (\ref{eq:23}), we have
\begin{equation*}
  \cP_{X,o} = \cP_{\bar{X},o} \cdot \sum_{k_v\geq 0} x_v^{k_v} \cdot (1-x_w).
\end{equation*}
But the right hand side of (\ref{eq:23}) satisfies the same
inductive formula.

\subsection{Remarks.} (a)
For splice-quotients, besides the divisorial multi-filtration,
there exists another filtration  on $\cO_{Y,o}$, indexed by the
same index set $L'$. The {\em monomial filtration} on $\setC\{z\}$
is defined  by the degrees of monomials:
\begin{equation}\label{eq:21}
\cG(l'):=\big\{\ \sum_\alpha a_\alpha z^\alpha\in\setC\{z\}\,:\,
\sum_{i\in\cE}\alpha_iE_i^*\geq l'\ \mbox{ whenever
$a_\alpha\not=0$}\big\}.
\end{equation}
Let $\phi:\setC\{z\}\to \cO_{Y,o}$ send the variable $z_i$ to the
corresponding end curve section (denoted by $z_i$ too, cf.
(\ref{rem:31})). By the definitions  $\phi(\cG(l'))\subset
\cF(l')$. But, in fact, (\ref{th:21}) implies their equality.
Indeed, let $\cH_\cG$, respectively $\cP_\cG$, be the series
associated with $\phi(\cG)$ similarly as $\cH$ and $\cP$ are
associated with the filtration $\cF$, cf. (\ref{eq:04}) and
(\ref{eq:06}). One shows by a similar argument as in
(\ref{ss:prth1}) that $\cH_\cG$ can be recovered from $\cP_\cG$ by
a similar way as $\cH$ from $\cP$ (cf. (\ref{eq:10b})). Since the
abelian cover $(Y,o)$ is a complete intersection with $\delta_v-2$
equations for each $v$ with $\delta_v\geq 3$, the series $\cP_\cG$
equals the right hand side of (\ref{eq:22}). But $\cP=\cP_\cG$
implies $\cH=\cH_\cG$. This together with $\phi(\cG(l'))\subset
\cF(l')$ implies $\phi(\cG)=\cF$.

The same fact follows by a standard argument from
(\ref{lem:35})(\ref{lem:35.1}) too, cf. \cite{Gabor}, since it
implies that the completion
$\widehat{\phi}:\widehat{\cG(l')}\to\widehat{\cF(l')}$ is onto,
hence  $\phi(\cG(l'))=\cF(l')$ too.

This generalizes a result of Okuma \cite[(3.3)]{Opg}, where the
filtrations  are associated with only one  node.

(b) For any fixed vertex $u$ of $\Gamma$ consider the divisorial
filtration $\cF_u$ associated with the irreducible component
$E_u$, and let $\cP_u$ be the Poincar\'e series of
$Gr_{\cF_u}\cO_{Y,o}$. Then, by \cite{CDGb}, $Gr_{\cF_u}\cO_{Y,o}$
is obtained from $\cP(\bt)$ by substitutions $t_w=1$ for all
$w\in\cV\setminus u$. Hence, by taking $H$-invariants (for details
see e.g. \cite{CDGb}), (\ref{th:21}) implies that the Poincar\'e
series of $Gr_{\cF_u}\cO_{X,o}$ is
\begin{equation}\label{zeta2}
 \frac{1}{\lvert H \rvert} \cdot \sum_{\rho \in \widehat{H}}\,
 \prod_{v\in\cV} {(1-\rho([E^*_v])
t_u^{-(E^*_v,E^*_u)})}^{\delta_v-2}.
\end{equation}
This provides topologically the semigroup of the $E_u$-valuation
too.

\section{Principal cycles and the multiplicity}\label{sec:princ}

\subsection{Principal $\setQ$-cycles.}\label{ss:pqd}
 Let $(X,o)$ be a normal surface singularity and
$\pi:\widetilde{X}\to X$ one of its fixed resolutions. We use the
notations of (\ref{ss:11}) and also define
\begin{equation*}
\cS:=\cS'\cap L=\{l\in L\,:\, (l,E_v)\leq 0 \ \ \mbox{for all
$v\in \cV$}\}.
\end{equation*}
Let $\mathrm{supp}(l)\subset E$ denote the support of a cycle
$l\in L$. For any set $\{l'_j\}_{j\in\cJ}$ with
$l'_j=\sum_vl'_{j,v}E_v\in L'$ set $\inf_jl'_j:=\sum_vl'_vE_v\in
L\otimes \setQ$, where $l'_v=\min_jl'_{j,v}$.
\begin{definition}\label{def:61} A rational cycle $l'\in L'$  is
called a  {\em principal $\setQ$-cycle} if $\cO(-l')$ has  a global
holomorphic section $\gs$ which is not zero on  any of the
exceptional components. Their set will be denoted by $\cPr'$. The set
$\cPr$ of {\em principal cycles} is defined as $\cPr'\cap L$; it
consists of precisely the restrictions to $E$ of the divisors of
$\pi$-pullbacks of analytic functions from $\cO_{X,o}$.
\end{definition}

Clearly,  $\cPr'$ (respectively $\cPr$) is a sub-semigroup of
$\cS'$ (resp. of $\cS$).

In general, for an arbitrary singularity, it is very difficult
(unsolved) task to decide if an element of $\cS'$ is principal or
not. E.g., if $(X,o)$ is rational (and $\pi$ is arbitrary), or
minimally elliptic (and $\pi$ is minimal and good) then
$\cPr=\cS$; but, in general, $\cPr\not=\cS$. The point is that, in
general, the semigroup $\cPr$ cannot be characterized
topologically, it strongly depends on the analytic structure
supported by the topological type fixed by $\Gamma$.

The next result characterizes $\cPr'$ combinatorially in
$L'(\Gamma)$, for a splice-quotient singularity associated with
$\Gamma$. Recall that $\cE$ denotes the set of end-vertices of
$\Gamma$.

\begin{theorem}[Characterization of principal $\setQ$-cycles]\label{th:61}
Let $(X,o)$ be a splice-quotient singularity associated with the
graph $\Gamma$, and consider its resolution $\pi$ with dual graph
$\Gamma$. Then for any $l'\in\cS'$ the following facts are
equivalent:

\begin{enumerate}\label{EN:1}
\item\label{EN:1.1}
$l'\in\cPr'$;
\item\label{EN:1.2}
for each $v\in \cV$ there exists an effective cycle $l_v\in
L_{\geq 0}$ such that:
\begin{enumerate}\label{en:1}
\item\label{en:1.1}
$E_v\not\subset \mathrm{supp}(l_v)$,
\item\label{en:1.2}
$(l_v,E_u)=-(l',E_u)$ for any $u\not\in\cE$,
\item\label{en:1.3}
$(l_v,E_u)\leq -(l',E_u)$ for any $u\in\cE$;
\end{enumerate}
\item\label{EN:1.3}
there exists finitely many monomial cycles
$\{D(\alpha_{(k)})\}_k\in l'+L$ so that
$l'=\inf_kD(\alpha_{(k)})$.
\end{enumerate}
\end{theorem}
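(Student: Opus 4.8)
The plan is to prove the cycle of implications $(\ref{EN:1.3})\Rightarrow(\ref{EN:1.1})\Rightarrow(\ref{EN:1.2})\Rightarrow(\ref{EN:1.3})$, using the end curve sections and the monomial machinery of \S\,\ref{ECC} together with Lemma \ref{lem:35}(\ref{lem:35.1}).

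For $(\ref{EN:1.3})\Rightarrow(\ref{EN:1.1})$: if $l'=\inf_k D(\alpha_{(k)})$ with all $D(\alpha_{(k)})\in l'+L$, then each monomial section $z^{\alpha_{(k)}}\in H^0(\widetilde{X},\cO(-l'))$ by (\ref{ss:mon}), since $D(\alpha_{(k)})-l'\in L_{\geq 0}$. The divisor of $z^{\alpha_{(k)}}$ in $\cO(-l')$ is $D(\alpha_{(k)})-l'$, and the infimum condition means that for every $v\in\cV$ there is a $k=k(v)$ with $(D(\alpha_{(k)})-l')$ having zero coefficient along $E_v$. A generic $\setC$-linear combination $\gs=\sum_k c_k z^{\alpha_{(k)}}$ is then a global section of $\cO(-l')$ whose divisor has zero coefficient along every $E_v$ simultaneously, i.e.\ $\gs$ is not identically zero on any exceptional component; hence $l'\in\cPr'$. (One must check that for each $v$ the set of combinations vanishing on $E_v$ is a proper subspace, which holds because $z^{\alpha_{(k(v))}}$ itself does not vanish on $E_v$; a finite union of proper subspaces is proper.)

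For $(\ref{EN:1.1})\Rightarrow(\ref{EN:1.2})$: given a section $\gs\in H^0(\cO(-l'))$ nowhere zero on $E$, consider its compactly-supported divisor part. The key is to invoke Lemma \ref{lem:35}(\ref{lem:35.1}): the span of the monomial sections of $\cO(-l')$ captures $H^0(\cO(-l'))/H^0(\cO(-l'-l))$ for every $l$, and (by the Remark in \S\,\ref{ss:prth2.1}(a), which asserts $\phi(\cG)=\cF$) the monomial/divisorial filtrations agree. Therefore $\gs$ is, modulo a section vanishing to high order, a combination of monomials, and one can extract monomial cycles $D(\alpha)$ realizing the needed vanishing. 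Concretely, for each $v$ one wants a section of $\cO(-l')$ whose divisor avoids $E_v$; writing that divisor as $l_v+(\text{terms along }\cE)$, conditions (\ref{en:1.1})--(\ref{en:1.3}) are exactly the numerical constraints $(l_v,E_u)=-(l',E_u)$ at non-end vertices (because at a non-end vertex the $E^*$-contribution from ends has a fixed form) and the inequality at end vertices (where the monomial exponents $\alpha_i$ contribute). This is essentially a translation of "$D(\alpha)-l'$ is an effective cycle not meeting $E_v$" into intersection-number language via (\ref{ss:mon}).

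For $(\ref{EN:1.2})\Rightarrow(\ref{EN:1.3})$: this is the converse bookkeeping. Given the cycles $l_v$, one must build, for each $v$, a monomial cycle $D(\alpha_{(v)})\in l'+L$ with zero coefficient along $E_v$, and then verify $l'=\inf_v D(\alpha_{(v)})$. The existence of $\alpha_{(v)}$ with the prescribed class $[l']\in H$ and the prescribed vanishing is where one uses that the graph is a splice diagram: the splice-diagram condition (the semigroup/admissibility condition on $\Gamma$) is precisely what guarantees that a monomial cycle $D(\alpha)$ with given class and given behavior at one vertex exists; this is the content underlying the end-curve constructions in \cite{NWuj2,Opg}. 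One then checks $\inf_v D(\alpha_{(v)})\geq l'$ (each $D(\alpha_{(v)})\geq l'$ since $D(\alpha_{(v)})-l'\in L_{\geq 0}$) and $\leq l'$ (at each vertex $v$, the term $D(\alpha_{(v)})$ has coefficient equal to that of $l'$, forced by (\ref{en:1.2}) at non-ends and by minimality at ends), giving equality.

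\textbf{Main obstacle.} I expect the hard step to be $(\ref{EN:1.1})\Rightarrow(\ref{EN:1.2})$, specifically translating "there is a global section nowhere vanishing on $E$" into the \emph{vertex-by-vertex} numerical conditions. The delicate point is that a single nowhere-vanishing section need not be a monomial, so one must pass through Lemma \ref{lem:35}(\ref{lem:35.1}) and the identification $\phi(\cG)=\cF$ to reduce to monomial sections, and then be careful that the monomial realizing non-vanishing along $E_v$ has the \emph{correct homology class} $[l']$ — adjusting the exponents at the ends changes $D(\alpha)$ only within $l'+L$, and one must ensure the adjustment can be made without reintroducing vanishing along $E_v$. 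Handling the one-vertex graph (two cuts) and, more generally, the end vertices (where condition (\ref{en:1.3}) is an inequality rather than an equality) requires the separate convention of \S\,\ref{ss:res} and a small case analysis. The implications $(\ref{EN:1.3})\Rightarrow(\ref{EN:1.1})$ and $(\ref{EN:1.2})\Rightarrow(\ref{EN:1.3})$ are, by contrast, mostly linear-algebra and intersection-form bookkeeping, with the splice-diagram admissibility condition doing the real work of guaranteeing the monomial cycles exist.
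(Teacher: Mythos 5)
Your cycle of implications is the same as the paper's, and two of the three arrows are handled exactly as in the paper: for (\ref{EN:1.3})$\Rightarrow$(\ref{EN:1.1}) a (generic) linear combination of the monomial sections $z^{\alpha_{(k)}}$, and for (\ref{EN:1.1})$\Rightarrow$(\ref{EN:1.2}) the observation that $l'\in\cPr'$ forces $H^0(\cO(-l'))/H^0(\cO(-l'-E_v))\not=0$ for every $v$, so Lemma (\ref{lem:35})(\ref{lem:35.1}) yields a monomial section $z^{\alpha_{(v)}}$ with $l_v:=D(\alpha_{(v)})-l'\in L_{\geq 0}$ and $E_v\not\subset\mathrm{supp}(l_v)$, and (\ref{en:1.1})--(\ref{en:1.3}) are just the intersection numbers of this equality. (Your detour through $\phi(\cG)=\cF$ and the worry about the class $[l']$ are superfluous: a monomial section of $\cO(-l')$ has $D(\alpha)\in l'+L$ by definition, cf. (\ref{ss:mon}).)

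The genuine gap is in (\ref{EN:1.2})$\Rightarrow$(\ref{EN:1.3}), which you describe as resting on the splice-diagram (semigroup) admissibility condition ``doing the real work'' of producing, for each $v$, a monomial cycle in $l'+L$ avoiding $E_v$. That condition concerns nodes and their branches and does not, by itself, produce a monomial cycle in a prescribed class dominating $l'$ with prescribed behaviour at a given vertex; as written, this existence claim is unsupported and the step would not close. In fact no such input is needed, because the data of (\ref{EN:1.2}) already hands you the cycle: set $\alpha_{(v),i}:=-(l'+l_v,E_i)$ for $i\in\cE$, which is $\geq 0$ by (\ref{en:1.3}); by (\ref{en:1.2}) the cycle $l'+l_v$ has zero intersection with every $E_u$, $u\notin\cE$, so by nondegeneracy of the intersection form $D(\alpha_{(v)})=\sum_{i\in\cE}\alpha_{(v),i}E_i^*=l'+l_v\in l'+L$; finally, since the $E_v$-coefficient of $l_v$ vanishes for each $v$, one gets $\inf_vD(\alpha_{(v)})=l'+\inf_vl_v=l'$. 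This purely linear-algebraic argument is the paper's proof of this implication; your plan needs it (or an equivalent construction) to be complete, though once inserted the rest of your outline stands.
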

\begin{proof} (\ref{EN:1.1})$\Rightarrow$(\ref{EN:1.2}).
If $l'\in \cPr'$ then the quotient
$H^0(\cO(-l'))/H^0(\cO(-l'-E_v))\not=0$ for each $v\in\cV$.
Then, by (\ref{lem:35}) (see also (\ref{ex:31})), there exists a
monomial $z^{\alpha_{(v)}}$ so that
\begin{equation}\label{eq:61}
\sum_{i\in\cE}\alpha_{(v),i}\,E_i^*-l'=l_v,
\end{equation}
where $l_v\in L_{\geq 0}$ and $E_v\not\subset \mathrm{supp}(l_v)$.
Hence (\ref{EN:1.2}) follows. In order to prove
(\ref{EN:1.2})$\Rightarrow$(\ref{EN:1.3}), define the non-negative
integers $\alpha_{(v),i}:=-(l'+l_v,E_i)$ for all $i\in\cE$. This
means that (\ref{eq:61}) is satisfied. Since $\inf_v
D(\alpha_{(v)})=\inf_v(l'+l_v)=l'$, (\ref{EN:1.3}) follows too.
For  (\ref{EN:1.3})$\Rightarrow$(\ref{EN:1.1}) notice that if
$l'=\inf_kD(\alpha_{(k)})$, then a linear combination of the
monomials $z^{\alpha_{(k)}}$ works for $\gs$.
\end{proof}

\begin{corollary}\label{cor:62} Let $(X,o)$ be as in
(\ref{th:61}). Then $m\cdot \cS'\subset \cPr$ for some $m\in
\setZ_{>0}$. In fact, if $\delta_v\not=2$ then  $e_vE^*_v\in\cPr$,
where $e_v$ is the order of $[E^*_v]$ in $H$.
\end{corollary}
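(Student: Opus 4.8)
The plan is to verify the criterion (\ref{EN:1.3}) of Theorem (\ref{th:61}) for the cycles in question. For the last assertion, fix a vertex $v$ with $\delta_v\neq 2$; we must exhibit finitely many monomial cycles $\{D(\alpha_{(k)})\}_k$ lying in the coset $e_vE^*_v+L$ whose infimum is $e_vE^*_v$ itself. The natural first move is to use the end-curve sections: by (\ref{rem:31}) and (\ref{ss:mon}), whenever $\pi$ admits a cut $H_w$ of $E_w$ we have a global section $z_w\in H^0(\widetilde X,\cO(-E^*_w))$ with divisor exactly $H_w$, and more generally $z^\alpha=\prod_{w}z_w^{\alpha_w}$ is a global section of $\cO(-l')$ precisely when $\sum_w\alpha_wE^*_w-l'\in L_{\geq 0}$. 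The case $\delta_v=1$ (i.e. $v\in\cE$) is essentially immediate: $z_v^{e_v}$ is a monomial section with divisor cycle $e_vE^*_v$, and combining it with monomial sections coming from the other ends (which exist since $[e_vE^*_v]=0$ in $H$ allows us to realize nearby monomial cycles $\geq e_vE^*_v$ that differ from it only in entries other than $v$) one gets the required infimum.

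For $\delta_v\geq 3$ the content is that $e_vE^*_v$ is still attainable. Here I would use that a splice-quotient satisfies the end-curve condition with end-curve sections $\{z_i\}_{i\in\cE}$, and invoke the Neumann–Wahl/Okuma machinery (encoded here in (\ref{ss:mon})–(\ref{def:3.1})) that for every $w\in\cV$ — not just the ends — the resolution admits a cut $H_w$ and hence an ``extended monomial section'' $z_w$ with $\cO(-E^*_w)=\cO(H_w)$; this is exactly the point in (\ref{rem:31}) that $z_w$ exists once a cut of $E_w$ does, and cuts of every vertex are available for splice-quotients. Then $z_w^{e_w}$ realizes $e_wE^*_w\in\cPr$. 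To get the $\inf$ to land exactly on $e_vE^*_v$ rather than something strictly below, I would take as the finite family the monomial cycles $D(\alpha_{(u)})$, one for each $u\in\cV$, chosen (as in the proof of (\ref{th:61}), (\ref{EN:1.2})$\Rightarrow$(\ref{EN:1.3})) with $\alpha_{(u),i}=-(e_vE^*_v+l_u,E_i)$ for a suitable effective $l_u$ with $E_u\not\subset\mathrm{supp}(l_u)$ and $(l_u,E_\bullet)$ matching $-(e_vE^*_v,E_\bullet)$ off $\cE$; taking $l_u=0$ whenever possible forces $D(\alpha_{(u)})=e_vE^*_v$ for that $u$, and the remaining $D(\alpha_{(u)})$ are $\geq e_vE^*_v$, so $\inf_u D(\alpha_{(u)})=e_vE^*_v$. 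This gives $e_vE^*_v\in\cPr'$, and since $e_vE^*_v\in L$ it is in $\cPr$.

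For the first assertion, $m\cdot\cS'\subset\cPr$ for some $m>0$: since $\cS'$ is generated over $\setZ_{\geq 0}$ by the $E^*_v$, and $\cPr$ is a sub-semigroup of $\cS'$ (noted right after (\ref{def:61})), it suffices that some fixed positive multiple of each generator lies in $\cPr$. For $v$ with $\delta_v\neq 2$ we just produced $e_vE^*_v\in\cPr$. For $v$ with $\delta_v=2$ — a vertex on a string — $E^*_v$ is a nonnegative integral combination of the $E^*_u$ with $u$ running over endpoints of the maximal string through $v$ together with neighbouring nodes/ends (concretely, $E^*_v$ lies in the cone spanned by the $E^*_u$ over the ``special'' vertices), so a positive multiple of $E^*_v$ is a sum of elements of $\cPr$, hence in $\cPr$. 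Taking $m$ to be a common multiple (e.g. $|H|$ times the lcm of the relevant $e_v$, or simply a large enough integer clearing all denominators in these cone expressions) yields $m\cdot\cS'\subset\cPr$.

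The main obstacle I anticipate is the $\delta_v\geq 3$ (node) case of the second statement: ensuring that a cut $H_v$ of a node $E_v$ actually exists for a splice-quotient, so that the section $z_v$ and the monomial cycle $e_vE^*_v$ make sense. The ends are handled by definition, but nodes require the full strength of the splice-quotient structure (the splice-diagram equations and their consequences for divisors of the coordinate monomials), i.e. precisely the existence of ``end-curve-like'' functions cutting every vertex; this is where one leans on (\ref{rem:31}) together with the construction recalled in (\ref{ss:12}) (and, if needed, the inductive restriction set-up of (\ref{ss:res})–(\ref{lem:32})). Everything else — the combinatorial bookkeeping with $\inf$ and the passage from generators to the whole cone — is routine.
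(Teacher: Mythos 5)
Your reduction to criterion (\ref{EN:1.3}) of (\ref{th:61}) is the right starting point, but the crucial case of a node ($\delta_v\geq 3$) rests on a claim that is not available: that a splice-quotient admits a cut $H_v$ (hence a section $z_v$ with $\cO(-E^*_v)=\cO(H_v)$) at \emph{every} vertex, not only at the ends. The construction recalled in (\ref{ss:12}) and \cite{NWuj2} provides end curve functions only for $v\in\cE$, and (\ref{rem:31}) is conditional: it produces $z_v$ only \emph{if} a cut of $E_v$ is already given. The existence of cuts at nodes is a nontrivial assertion which you do not prove (you name it as ``the main obstacle'' but only point back to (\ref{rem:31}) and (\ref{ss:res})--(\ref{lem:32}), which do not supply it). The paper's proof avoids this entirely: for a node $v$ it invokes the \emph{monomial (semigroup) condition}, which is part of the combinatorial definition of splice-quotient graphs, to obtain for each branch $C_k$ of $v$ a monomial cycle $D(\alpha_k)$, supported on the ends lying in $C_k$, with $D(\alpha_k)-E^*_v$ effective, integral and supported on $C_k$; since the branches are pairwise disjoint, $e_vE^*_v=\inf_k D(e_v\alpha_k)$, and (\ref{EN:1.3}) applies. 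Note also that your device ``taking $l_u=0$ whenever possible forces $D(\alpha_{(u)})=e_vE^*_v$'' cannot work at a node: $e_vE^*_v$ is itself a monomial cycle only when $v\in\cE$, so for a node no single $D(\alpha_{(k)})$ equals $e_vE^*_v$, and one genuinely needs the branch-wise choice to bring the infimum down to $e_vE^*_v$.

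The argument you give for $\delta_v=2$ (needed for $m\cdot\cS'\subset\cPr$) also fails: it is not true that $E^*_v$ lies in the cone spanned by the duals of the ends and nodes. For the string of three $(-2)$-vertices one has $E^*_1=\tfrac14(3E_1+2E_2+E_3)$, $E^*_2=\tfrac14(2E_1+4E_2+2E_3)$, $E^*_3=\tfrac14(E_1+2E_2+3E_3)$, and $E^*_2\notin\setR_{\geq 0}E^*_1+\setR_{\geq 0}E^*_3$, although this is a splice-quotient (cyclic quotient) graph. The paper instead treats every vertex with $\delta_v\geq 2$ by the same branch-wise device: in each branch $C_k$ pick any integral monomial cycle of the subgraph $C_k$, let $m_k$ be its multiplicity along the vertex $w_k$ adjacent to $v$, so that the associated $\Gamma$-monomial cycle differs from $m_kE^*_v$ by an effective integral cycle supported on $C_k$; then $m_v=\mathrm{lcm}_k\{m_k\}$ gives $m_vE^*_v=\inf_kD(\alpha_k m_v/m_k)\in\cPr$, and a common multiple of the $e_v$ (for $\delta_v\neq 2$) and the $m_v$ (for $\delta_v=2$) yields $m\cdot\cS'\subset\cPr$.
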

\begin{proof}
For $v\in\cE$, $e_v E^*_v\in\cPr$ by the definition of
splice-quotients (see e.g. (\ref{rem:31})). 
Next, fix a node  $v$. Let $\{C_k\}_{1\leq k\leq
\delta_v}$ be its branches. By the monomial condition there exists
a monomial cycle $D(\alpha_{k})$ so that $D(\alpha_{k})-E^*_v$ is
effective, integral and supported on $C_k$. Hence
$e_vE^*_v=\inf_kD(e_v\alpha_{k})$.

More generally,  take a vertex $v$ with $\delta_v\geq 2$ adjacent
vertices $\{w_k\}_k$ sitting in the branches $\{C_k\}_k$ of $v$.
Regard $C_k$ as a subgraph, let $\bar{E}^*_i$ be the $C_k$-dual of
$E_i$ in this subgraph, and in $C_k$ consider an arbitrary
integral monomial cycle of type $\sum_{i\in
\cE_{C_k}}\alpha_{k,i}\bar{E}^*_i$. Let $m_{k}$ be the
multiplicity of $E_{w_k}$ in this cycle. This means that
$\sum_{i\in\cE_{C_k}}\alpha_{k,i}E^*_i-m_kE^*_v$ is effective,
integral and supported on $C_k$. Hence $m_v=\mathrm{lcm}_k\{m_k\}$
works because $m_vE^*_v=\inf_kD(\alpha_km_v/m_k)$.
\end{proof}


\begin{remark}\label{rem:61}
(a) It can happen that $e_vE^*_v\not\in\cPr$; e.g. in the case of
the $-13$-vertex of the right graph from (\ref{ex:61})  $e_v=1$
but $E^*_v\not\in\cPr$.

(b) One of the motivations for the characterization of $\cPr$ is
the Nash Conjecture. One way to separate the components of the arc
space is to use the ratios $l_i/l_j$ of the coefficients of the
principal cycles $\sum_il_iE_i\in \cPr$, see e.g. \cite{CaPl}. By
(\ref{cor:62}), for splice-quotients, the set of these ratios
remain the same if we replace $\cPr$ by $\cS$.
\end{remark}

\subsection{The minimal and maximal cycle}\label{ss:MinMax}
Let us recall the following definitions:

\begin{definition}\label{def:62}
Let $Z_{{\rm min}}\in \cS$ be the {\em minimal (or fundamental) cycle}
of $\Gamma$, i.e. the unique minimal non-zero element of $\cS$.
Let $\cO_{\widetilde{X}}(-Z_{{\rm max}})$ be the divisorial part
of the pullback $\pi^*(m_{X,o})$ of the maximal ideal.
Equivalently, $Z_{{\rm max}}$ is the unique minimal non-zero
element of $\cPr$. It is called the {\em maximal cycle} of
$(X,o)$.
\end{definition}
$Z_{{\rm min}}$ can be determined by a combinatorial algorithm of
Laufer \cite[(4.1)]{Laufer72}. Clearly, $Z_{{\rm min}}\leq Z_{{\rm
max}}$, but in general $Z_{{\rm max}}$ cannot be read from
$\Gamma$.

We denote the (finitely generated) semigroup
of integral monomial cycles by 
$$L_\cE:=\{\sum_{i\in\cE}\alpha_iE^*_i\,:\, \alpha_i\in\setZ_{\geq
0}\}\cap L.$$

\begin{corollary}\label{cor:61} Let $(X,o)$ be as in
(\ref{th:61}). Then  $Z_{{\rm max}}=\inf( L_\cE\setminus \{0\})$.
\end{corollary}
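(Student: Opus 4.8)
The plan is to deduce both inequalities $Z_{{\rm max}}\leq\inf(L_\cE\setminus\{0\})$ and $Z_{{\rm max}}\geq\inf(L_\cE\setminus\{0\})$ straight from the characterization in Theorem \ref{th:61}, using only that $L_\cE$ is a finitely generated semigroup and that every $E_i^*$ has strictly positive entries. Write $w:=\inf(L_\cE\setminus\{0\})$.

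First I would dispose of some trivialities about $w$. Picking semigroup generators $g_1,\dots,g_m$ of $L_\cE$, every nonzero element of $L_\cE$ dominates one of the $g_j$ coordinatewise (all $g_j\geq 0$), so $w=\inf_j g_j$ is in fact a \emph{finite} infimum; hence $w\in L$, and since each $g_j$ has strictly positive entries so does $w$, in particular $w\neq 0$.

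For the inequality $Z_{{\rm max}}\leq w$ I would run the argument proving $(\ref{EN:1.3})$\textrm{\,$\Rightarrow$\,}$(\ref{EN:1.1})$ in Theorem \ref{th:61} directly on $w$: writing $g_j=D(\beta_{(j)})$, one has $w=\inf_j D(\beta_{(j)})$ with all $D(\beta_{(j)})\in L\subseteq w+L$, so a generic linear combination of the monomial sections $z^{\beta_{(j)}}\in H^0(\cO(-w))$ is non-vanishing on every $E_v$ (for each $v$ some $\beta_{(j)}$ realizes the coordinatewise minimum defining $w$), which shows $w\in\cPr'$ and hence $w\in\cPr'\cap L=\cPr$; since $w\neq 0$, minimality of $Z_{{\rm max}}$ in $\cPr\setminus\{0\}$ gives $Z_{{\rm max}}\leq w$. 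For the reverse inequality I would use that $Z_{{\rm max}}\in\cPr\subseteq\cPr'$ lies in $\cS'$, apply $(\ref{EN:1.1})$\textrm{\,$\Rightarrow$\,}$(\ref{EN:1.3})$ to get finitely many monomial cycles $D(\alpha_{(k)})\in Z_{{\rm max}}+L$ with $Z_{{\rm max}}=\inf_k D(\alpha_{(k)})$, note that each $D(\alpha_{(k)})$ then lies in $L$ (hence in $L_\cE$) and is nonzero (it dominates $Z_{{\rm max}}\neq 0$), so $w\leq D(\alpha_{(k)})$ for every $k$ and therefore $w\leq\inf_k D(\alpha_{(k)})=Z_{{\rm max}}$. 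Combining the two gives $Z_{{\rm max}}=w$.

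The one place needing care — and the reason for invoking the proof of $(\ref{EN:1.3})$\textrm{\,$\Rightarrow$\,}$(\ref{EN:1.1})$ rather than citing Theorem \ref{th:61} verbatim — is that that theorem's equivalences are stated only for $l'\in\cS'$, while a priori $w$ is just an infimum of elements of the anti-nef cone and need not itself be anti-nef; one learns $w\in\cS'$ only afterwards, from the inclusion $\cPr'\subseteq\cS'$. Everything else is routine bookkeeping with coordinatewise minima of integral cycles.
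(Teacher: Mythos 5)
Your proposal is correct and is essentially the intended deduction: the paper states (\ref{cor:61}) as an immediate consequence of (\ref{th:61}), and your two inequalities (via (\ref{EN:1.3})$\Rightarrow$(\ref{EN:1.1}) applied to $w=\inf(L_\cE\setminus\{0\})$, and (\ref{EN:1.1})$\Rightarrow$(\ref{EN:1.3}) applied to $Z_{\rm max}$ together with the minimality of $Z_{\rm max}$ in $\cPr\setminus\{0\}$) are exactly that argument. Your cautionary remark is in fact unnecessary: the coordinatewise minimum of anti-nef cycles is again anti-nef (if $w_v=g_{j,v}$ then $(w,E_v)\leq (g_j,E_v)\leq 0$), so $w\in\cS'$ a priori and (\ref{th:61}) could be cited verbatim, but your workaround of rerunning the (3)$\Rightarrow$(1) argument is harmless.
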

\begin{example}\label{ex:61}
Consider the following two graphs, both with $L'=L$.

\begin{picture}(400,45)(120,0)
\put(150,25){\circle*{4}}
\put(175,25){\circle*{4}} \put(200,25){\circle*{4}}
\put(225,25){\circle*{4}}
\put(200,5){\circle*{4}} \put(150,25){\line(1,0){75}}
\put(200,25){\line(0,-1){20}}
\put(150,35){\makebox(0,0){$-2$}}
\put(175,35){\makebox(0,0){$-7$}}
\put(177,17){\makebox(0,0){$E_1$}}
\put(152,17){\makebox(0,0){$E_0$}}
\put(200,35){\makebox(0,0){$-1$}}
\put(225,35){\makebox(0,0){$-2$}} 
\put(210,5){\makebox(0,0){$-3$}}

\put(325,25){\circle*{4}} \put(350,25){\circle*{4}}
\put(375,25){\circle*{4}} \put(400,25){\circle*{4}}
\put(425,25){\circle*{4}} \put(350,5){\circle*{4}}
\put(400,5){\circle*{4}} \put(325,25){\line(1,0){100}}
\put(350,25){\line(0,-1){20}} 
\put(400,25){\line(0,-1){20}} \put(325,35){\makebox(0,0){$-2$}}
\put(350,35){\makebox(0,0){$-1$}}
\put(375,35){\makebox(0,0){$-13$}}\put(377,17){\makebox(0,0){$E_1$}}
\put(400,35){\makebox(0,0){$-1$}}
\put(425,35){\makebox(0,0){$-2$}} \put(360,5){\makebox(0,0){$-3$}}
\put(410,5){\makebox(0,0){$-3$}}
\end{picture}

For the left graph $Z_{{\rm min}}\in\cPr$, nevertheless
$\cPr\not=\cS$. In fact, $\cPr\setminus \cS=\{E_1^*\}$, where
$E_1$ is the $-7$-curve. The right graph is an example with
$Z_{{\rm min}}\not\in\cPr$ and with infinite $\cPr\setminus \cS$.
Indeed,  let $E_1$ denote the $-13$-curve and let $E_i$ be any
exceptional curve with valency 1. 
Then  $E_1^*+kE_i^*\not \in\cPr$ for every $k\geq 0$.
In this case  $Z_{{\rm max}}=2Z_{{\rm min}}$.
%
\end{example}

\subsection{Base points and multiplicity.} In general,  $\pi^*m_{X,o}$
has the form $\cO_{\widetilde{X}}(-Z_{{\rm max}})\otimes
\bigotimes_{P\in\cB}\cI_P$, where $\cB$ denotes the finite set of
base-points and $\cI_P$ is an $m_P$-primary ideal for $P\in \cB$.
Next we provide a combinatorial description of the ideals $\cI_P$.

Assume that $\pi$ satisfies the end curve condition, we fix some
end curves $H_i$ for $i\in\cE$ as in (\ref{ECC}). Consider
$D:=E\cup(\cup_{i\in\cE}H_i)$, and let $\mathrm{Sing}(D)$ denote
the singular (double) points of $D$. Fix such a point
$P\in\mathrm{Sing}(D)$ and order the two components of $D$
containing it. Associate with each $\alpha\in L_\cE$ the
multiplicity orders $(a_\alpha,b_\alpha)$ in $\sum_{i\in
\cE}\alpha_i(H_i+E_i^*)-Z_{{\rm max}}$  of the two components of
$D$ containing $P$.

The results (\ref{th:61}) and (\ref{cor:61}) have the following
consequence too:
\begin{corollary}\label{lem:610}
\begin{enumerate}
\item\label{lem:61}
$\cB\subset {\rm Sing}(D)$.  In particular, if $H_i\cap E\in \cB$
for some $i\in \cE$, then the intersection point $H_i\cap E$ is
independent of the choice of $H_i$.
\item\label{lem:62} Fix $P\in{\rm Sing}(D)$.
In some local coordinates $(t,s)$ of $P$, $\cI_P$ is the monomial
ideal generated by $\{t^{a_\alpha}s^{b_\alpha}\}_{\alpha\in
L_\cE}$.
\end{enumerate}
\end{corollary}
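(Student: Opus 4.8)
The plan is to analyze the pullback $\pi^*m_{X,o}$ locally near $E$, using the monomial sections produced by the characterization of principal $\setQ$-cycles. First I would recall that $\cO_{\widetilde X}(-Z_{\mathrm{max}})$ is by definition the divisorial part of $\pi^*m_{X,o}$, so $\pi^*m_{X,o}=\cO_{\widetilde X}(-Z_{\mathrm{max}})\otimes\cJ$ for some ideal sheaf $\cJ$ of finite colength, and $\cB$ is the support of $\cO_{\widetilde X}/\cJ$. The key observation is that $\cJ$ is generated (locally everywhere on $\widetilde X$) by the images of the generators of $m_{X,o}$; by (\ref{cor:62}) and the monomial condition, the ideal $m_{X,o}\subset\cO_{X,o}$ contains enough functions whose $\pi$-pullback divisors are exactly of the monomial-cycle form $\sum_{i\in\cE}\alpha_i(H_i+E_i^*)$ with $D(\alpha)=\sum_i\alpha_iE_i^*\in L_\cE$, and by (\ref{cor:61}) the infimum of these cycles is $Z_{\mathrm{max}}$. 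Hence after dividing out $\cO(-Z_{\mathrm{max}})$, the stalk $\cJ_P$ at a point $P\in E$ is generated by the local equations of the residual divisors $\sum_i\alpha_i(H_i+E_i^*)-Z_{\mathrm{max}}$, which are effective and, near $E$, supported on $D=E\cup(\cup_i H_i)$.

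For part (\ref{lem:61}): at a point $P\in E$ lying on a single smooth branch of $D$ (i.e. $P$ a smooth point of $E$ not on any $H_i$, or a smooth point of some $H_i$ not on $E$, but such points aren't on $E$), the residual divisor $\sum_i\alpha_i(H_i+E_i^*)-Z_{\mathrm{max}}$ is, near $P$, a non-negative multiple of a single coordinate, and by the infimum property (\ref{cor:61}) the minimum over $\alpha\in L_\cE\setminus\{0\}$ of that multiplicity is $0$; so $\cJ_P=\cO_{\widetilde X,P}$ and $P\notin\cB$. Therefore $\cB\subset\mathrm{Sing}(D)$. The consequence about $H_i\cap E$ being independent of the choice of $H_i$ then follows because if $H_i\cap E\in\cB$, the point is forced to be a double point of $D$, hence lies on $E$; and the multiplicity $Z_{\mathrm{max}}$ along $E_i^*$ is fixed, so the intersection point is pinned down — here I would invoke (\ref{rem:31}), which says $\cO(-E_i^*)=\cO(H_i)$, so the divisor class of $H_i$ and hence (for an end) its intersection with $E$ is determined.

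For part (\ref{lem:62}): fix $P\in\mathrm{Sing}(D)$ with the two local branches given by coordinates $(t,s)$. Every generator of $\cJ_P$ coming from a monomial-cycle function has local equation $t^{a_\alpha}s^{b_\alpha}$ by definition of $(a_\alpha,b_\alpha)$ as the multiplicity orders in $\sum_i\alpha_i(H_i+E_i^*)-Z_{\mathrm{max}}$ of the two components through $P$. So $\cI_P\supseteq(t^{a_\alpha}s^{b_\alpha}:\alpha\in L_\cE)$. For the reverse inclusion I would argue that these monomial generators already suffice: by (\ref{lem:35})(\ref{lem:35.1}) (applied with $\linebundle=\cO(-Z_{\mathrm{max}})$ and suitable $l$), the monomial sections generate the relevant quotient spaces, so any local section of $\pi^*m_{X,o}$ near $P$ is, modulo $\cO(-Z_{\mathrm{max}}-l)$ for $l$ large, a combination of monomial sections, forcing $\cI_P$ to be the stated monomial ideal. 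The $m_P$-primariness is automatic since $\cB$ is finite (colength finite).

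The main obstacle I anticipate is the reverse inclusion in (\ref{lem:62}): one must show that no \emph{non-monomial} element of $m_{X,o}$ contributes a genuinely new local generator of $\cI_P$ — i.e. that the monomial sections already generate $\pi^*m_{X,o}$ locally at $P$, not merely their divisors' infimum. This is exactly the content one extracts from Lemma \ref{lem:35}(\ref{lem:35.1}) together with the equality $\phi(\cG)=\cF$ established in the Remarks of Section \ref{ss:12}; the care needed is that Lemma \ref{lem:35}(\ref{lem:35.1}) is a statement about global section quotients, so I would pass from the global statement to the local stalk at $P$ by choosing $l\in L_{\geq 0}$ large enough that $E_I\not\supseteq\mathrm{supp}(l)$-type conditions localize the comparison to a neighborhood of $P$, using finiteness (\ref{eq:finite}) of the relevant index sets.
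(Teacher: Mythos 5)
Your overall route is the intended one: reduce everything to the fact that the $H$-invariant monomials $z^\alpha$, $D(\alpha)\in L_\cE\setminus\{0\}$, generate $m_{X,o}$ modulo high order (Lemma (\ref{lem:35})(\ref{lem:35.1}), equivalently $\phi(\cG)=\cF$), so that at each $P\in E$ the ideal $\cI_P$ is generated by the local equations of the residual divisors $\sum_i\alpha_i(H_i+E_i^*)-Z_{{\rm max}}$, whose multiplicities attain $0$ along every exceptional branch because $Z_{{\rm max}}=\inf(L_\cE\setminus\{0\})$ by (\ref{cor:61}). This gives $\cB\subset{\rm Sing}(D)$ and, with the localization you sketch, part (\ref{lem:62}); for the latter you still owe the small verification that your monomial ideal contains a pure power in the coordinate of an end-curve branch, i.e.\ that some nonzero integral monomial cycle has $\alpha_i=0$ (take $e_jE^*_j$ for an end $j\neq i$, which exists since $\Gamma$ has at least two ends); without this, the step ``$g\in\cF(l)$ with $l$ large implies $g$ lies in the monomial ideal at $P$'' fails at the points $H_i\cap E$, since $\cF(l)$ controls vanishing orders only along exceptional branches.

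The genuine gap is your justification of the second assertion of (\ref{lem:61}). From $\cO(H_i)\cong\cO(-E^*_i)$ (\ref{rem:31}) you cannot conclude that the point $H_i\cap E$ is determined: a line bundle on $\widetilde{X}$ has in general many sections with distinct divisors, and different admissible end curve functions do produce end curves through different points of $E_i$ (this is exactly why the statement carries the hypothesis $H_i\cap E\in\cB$; your argument, if it worked, would give unconditional independence, which is false, e.g.\ for rational singularities where end curves can be moved). The correct argument uses only what you already proved together with the fact that $\cB$ is intrinsic: $\cB$ is defined by $\pi^*m_{X,o}$ alone and does not depend on the choice of end curves. If $P=H_i\cap E\in\cB$ and $\{H'_j\}_{j\in\cE}$ is another choice, with configuration $D'$, then $\cB\subset{\rm Sing}(D')$ by the inclusion you established; since a cut meets exactly one exceptional component transversally, $P$ is a smooth point of $E$, hence $P$ must lie on some $H'_j$, and since $P\in E_i$ while $H'_j$ meets only $E_j$, necessarily $j=i$ and $P=H'_i\cap E$.
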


\begin{example}
On the left graph of (\ref{ex:61}), $\cB$ consists of only one
point, the intersection of a cut with $E_0$,  the unique curve
$E_i$ with $(Z_{{\rm min}},E_i)<0$. In the case of the right graph
$\cB=\emptyset$. In the next example (\ref{ex:-8}), $\cB$ consists
of one point, which is the intersection of two irreducible
exceptional components.
\end{example}

For every  $P\in \cB$, the pairs  $(a_\alpha,b_\alpha)_{\alpha}$
determine a (convenient) Newton diagram:
\begin{equation*}
N^-_P=\setR_{\geq 0}^2\setminus \mbox{convex
closure}\big\{\,\bigcup_{\alpha\in L_\cE}\,
\big((a_\alpha,b_\alpha)+\setR_{\geq 0}^2\big)\,\big\}.
\end{equation*}
\begin{theorem}\label{th:62} The multiplicity ${\rm mult}(X,o)$ of $(X,o)$ is
topological:
\begin{equation*}
{\rm mult}(X,o)=-Z_{{\rm max}}^2+2\cdot \sum _{P\in\cB} \, {\rm
area}(N^-_P).
\end{equation*}
\end{theorem}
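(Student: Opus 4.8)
The plan is to compute the multiplicity of $(X,o)$ as the self-intersection of the divisorial part of $\pi^*m_{X,o}$ after blowing up the base points, using the standard formula $\mathrm{mult}(X,o)=-D^2$ where $D$ is the (total transform, compactly supported) cycle cutting out the maximal ideal on a resolution which resolves the base points as well. Concretely, by the discussion preceding the theorem, $\pi^*m_{X,o}=\cO_{\widetilde X}(-Z_{\mathrm{max}})\otimes\bigotimes_{P\in\cB}\cI_P$, and by Corollary \ref{lem:610}(\ref{lem:62}) each $\cI_P$ is, in suitable local coordinates $(t,s)$ at $P$, the monomial ideal generated by $\{t^{a_\alpha}s^{b_\alpha}\}_{\alpha\in L_\cE}$. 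So first I would pass from $\widetilde X$ to a further modification $\widetilde X'\to\widetilde X$ which is an embedded resolution of the scheme defined by $\pi^*m_{X,o}$; since each $\cI_P$ is a monomial ideal, this is precisely the toric resolution attached to the Newton diagram $N^-_P$, and it is purely combinatorial.

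The second step is the local self-intersection count. On $\widetilde X'$ the ideal $\pi^*m_{X,o}$ becomes locally principal, cut out by an effective divisor $D'$, and $\mathrm{mult}(X,o)=-(D')^2$. Write $D'=\widetilde{Z_{\mathrm{max}}}+\sum_{P\in\cB}F_P$, where $\widetilde{Z_{\mathrm{max}}}$ is the total transform of $Z_{\mathrm{max}}$ and $F_P$ is the part of $D'$ created over $P$ by resolving $\cI_P$. Because distinct $P$'s have disjoint exceptional loci and $\widetilde{Z_{\mathrm{max}}}$ has zero intersection with every fiber component (it is a pullback), the cross terms vanish and $(D')^2=Z_{\mathrm{max}}^2+\sum_{P\in\cB}F_P^2$. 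The key local computation is then the identity $-F_P^2=2\,\mathrm{area}(N^-_P)$ for a plane monomial ideal with Newton diagram $N^-_P$: this is the classical fact that the colength of a monomial ideal $\cI_P$ equals the number of lattice points strictly under the diagram, that blowing up a monomial ideal is the toric morphism of the dual fan of $N^-_P$, and that the self-intersection of the exceptional cycle of this toric resolution equals $-2\,\mathrm{area}(N^-_P)$ (equivalently, $e(\cI_P)=2\,\mathrm{area}(N^-_P)$ for the Hilbert–Samuel multiplicity of the monomial ideal, by Teissier's or Rees's formula). Summing gives $\mathrm{mult}(X,o)=-Z_{\mathrm{max}}^2+2\sum_{P\in\cB}\mathrm{area}(N^-_P)$.

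Finally one must justify that the pairs $(a_\alpha,b_\alpha)_\alpha$ genuinely generate $\cI_P$, i.e.\ that the monomial sections $z^\alpha$ with $\alpha\in L_\cE$ exhaust (up to generation) the sections of $\cO_{\widetilde X}(-Z_{\mathrm{max}})$ at $P$ that come from $m_{X,o}$; this is exactly what Corollary \ref{lem:610}(\ref{lem:62}) records, and it rests on Lemma \ref{lem:35}(\ref{lem:35.1}) together with Corollary \ref{cor:61} identifying $Z_{\mathrm{max}}=\inf(L_\cE\setminus\{0\})$, so no further input is needed there. The main obstacle I anticipate is the bookkeeping in the second step: one has to be careful that $\cB\subset\mathrm{Sing}(D)$ (Corollary \ref{lem:610}(\ref{lem:61})) so that each $P$ really is an ordinary double point of a normal-crossing divisor and the local picture at $P$ is the smooth germ $(\setC^2,0)$ with coordinates adapted to the two branches; and one must confirm that the $F_P$ produced by the toric resolution of $\cI_P$ has no intersection with the strict transforms of the two branches through $P$ contributing to $(D')^2$, i.e.\ that those branches are not part of $D'$ (true since $m_{X,o}$ is $\pi^*$-pulled back and $Z_{\mathrm{max}}$ is supported on $E$ only). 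Once the geometry at each $P$ is pinned down as a monomial-ideal blow-up in $\setC^2$, the area formula is standard and the theorem follows.
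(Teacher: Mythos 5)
Your proposal is correct, and it reaches the formula by a route that is equivalent to, but packaged differently from, the paper's. The paper's proof is essentially two citations: Wagreich's multiplicity formula gives ${\rm mult}(X,o)=-Z_{{\rm max}}^2+\sum_{P\in\cB}(\phi_P,\psi_P)_P$ where $\phi_P,\psi_P$ are two generic plane curve germs with Newton diagram $N^-_P$ (these arise as local equations at $P$ of strict transforms of two generic hyperplane sections, using Corollary (\ref{lem:610})), and then the classical Newton-diagram formula $(\phi_P,\psi_P)_P=2\cdot{\rm area}(N^-_P)$ finishes. You instead principalize $m_{X,o}\cO_{\widetilde X}$ by toric blow-ups of the monomial ideals $\cI_P$, write the new divisorial part as $\rho^*Z_{{\rm max}}+\sum_P F_P$, kill the cross terms by the projection formula, and compute the local contribution as the Hilbert--Samuel multiplicity $e(\cI_P)=-F_P^2=2\cdot{\rm area}(N^-_P)$. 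The bridge between the two proofs is that two generic elements of $\cI_P$ form a minimal reduction, so $e(\cI_P)=(\phi_P,\psi_P)_P$; thus your version re-proves the Wagreich-type decomposition rather than citing it, at the cost of invoking the standard facts on principalization and multiplicities of monomial ideals. Two small remarks: the colength of $\cI_P$ (lattice points under the staircase) is \emph{not} the invariant you need and in general differs from $2\cdot{\rm area}(N^-_P)$; only the multiplicity identity $e(\cI_P)=2\cdot{\rm area}(N^-_P)$ (which you also state, and which depends only on the integral closure) enters. Also, your final worry about the strict transforms of the two branches of $D$ through $P$ is unnecessary --- the cross terms are already disposed of by $\rho^*Z_{{\rm max}}\cdot F_P=0$ --- and its parenthetical justification is actually false when $P$ is an intersection point of two exceptional curves (as in Example (\ref{ex:-8})), since then both branches lie in the support of $Z_{{\rm max}}$; fortunately nothing in your argument uses that claim.
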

\begin{proof}
For each $P$ let $\phi_P$ and $\psi_P$ be two plane curve
singularities with Newton diagram $N^-_P$ and generic
coefficients. Then (see e.g. the proof of \cite[(2.7)]{Wa})
\begin{equation*}
{\rm mult}(X,o)=-Z_{{\rm max}}^2+\sum _{P\in\cB}
(\phi_P,\psi_P)_P,
\end{equation*}
where $(-,-)_P$ denotes the intersection multiplicity at $P$. One
the other hand it is well-known that
 $(\phi_P,\psi_P)_P=2\cdot {\rm area}(N^-_P)$, see e.g. \cite{Tei},
or \cite[page 276]{Ploski}.
\end{proof}

\begin{example}\label{ex:-8}
Consider the following graph.

\begin{picture}(300,45)(250,0)
\put(325,25){\circle*{4}} \put(350,25){\circle*{4}}
\put(450,25){\circle*{4}}
\put(375,25){\circle*{4}} \put(400,25){\circle*{4}}
\put(425,25){\circle*{4}} \put(350,5){\circle*{4}}
\put(425,5){\circle*{4}} \put(325,25){\line(1,0){125}}
\put(350,25){\line(0,-1){20}}
\put(425,25){\line(0,-1){20}}
\put(325,35){\makebox(0,0){$-2$}}
\put(350,35){\makebox(0,0){$-1$}}
\put(375,35){\makebox(0,0){$-8$}}
\put(400,35){\makebox(0,0){$-8$}}
\put(425,35){\makebox(0,0){$-1$}} \put(360,5){\makebox(0,0){$-3$}}
\put(412,5){\makebox(0,0){$-3$}} \put(450,35){\makebox(0,0){$-2$}}

\put(325,17){\makebox(0,0){$z_1$}}
\put(340,5){\makebox(0,0){$z_2$}}
\put(451,17){\makebox(0,0){$z_3$}}
\put(436,5){\makebox(0,0){$z_4$}}
\end{picture}

In this case $Z_{{\rm max}}=Z_{{\rm min}}$ with $Z_{{\rm max}}^2=-2$. Let $P$
be the intersection point of the two $(-8)$-curves. Then $\pi^*(m_{X,o})=
\cO_{\widetilde{X}}(-Z_{{\rm max}})\otimes m_P$, hence ${\rm mult}(X,o)=3$.

In fact, in this case we can even write the equations of $(X,o)$.
Indeed, if $\{z_i\}_{1\leq i\leq 4}$ are the end curve sections
(as indicated in the picture), then a possible choice for the
splice diagram equations is
$z_1^2+z_2^3+z_3^5=z_3^2+z_4^3+z_1^5=0$.  $H\simeq \setZ_3$ is
generated by $[E^*_2]$, and acts by
$[E^*_2]*(z_1,z_2,z_3,z_4)=(z_1,\bar{\psi} z_2,z_3,\psi z_4)$,
where $\psi=e^{2\pi i/3}$. The generators of the invariants are
$z_1,z_3$, $a=z_2^3$, $b=z_4^3$ and $c=z_2z_4$, hence the
equations of $(X,o)$ are $z_1^2+a+z_3^5=z_3^2+b+z_1^5=ab-c^3=0$,
or $c^3=(z_1^2+z_3^5)(z_3^2+z_1^5)$.
\end{example}

\section{The proof of (\ref{lem:35}).}\label{s:PROOF}
We prove both statements by a simultaneous induction on the number
of vertices of the resolution graph. For this we will use the
notations (\ref{ss:res}) and (\ref{lem:31}). The cut functions
$\{\bar{f}\}_{i\in \cE'}$ and $f_v|\wxb$ induce sections in
$H^0(\wxb,\cO_{\wxb}(-\bar{E}^*_i))$, resp. in
$H^0(\wxb,\cO_{\wxb}(-\bar{E}^*_w))$,  denoted by $\bar{z}_i$ for
$i\in \cE'$, resp. by  $y_w$ (cf. (\ref{rem:31})). Clearly, if
$\delta_w>2$, then $w\not\in \bar{\cE}$, hence $\bar{\cE}=\cE'$.
Hence the new monomial sections have the form $\prod_{i\in \cE'}
\bar{z}_i^{\alpha_i}$ ($\alpha_i\geq 0$); a product of type
$\prod_{i\in \cE'} \bar{z}_i^{\alpha_i}\cdot y_w^\beta$
($\beta\geq 0$) is an extended monomial section. If $\delta_w=2$,
then $\bar{\cE}=\cE'\cup w$, hence all $\{\bar{H}_i\}_i$ and
$\bar{H}_w$ are end curves of $\wxb$, and the new monomials
sections have the form $\prod_{i\in \cE'}
\bar{z}_i^{\alpha_i}\cdot y_w^\beta$ ($\alpha_i\geq 0,\ \beta\geq
0$). If $\delta_w=1$ then for $\wxb$ we preserve both (old and
new) cuts $H_w$ and $\bar{H}_w$, and the new  monomials sections
have the form $\bar{z}_w^\alpha y_w^\beta$.

\vspace{2mm}

 The induction starts with a
one-vertex graph with two end curves $H_1$ and $H_2$, in which
case we have to check only (\ref{lem:35.1}). In this case the
picture is very explicit (and can be verified easily by the
reader):

Set $p=-E^{2}$. Then $E^*=(1/p)E$ and $H=\setZ_p$. The abelian
cover  $Y$ is $\setC^2$, $\cO_{Y,o}=\setC\{z_1,z_2\}$, where $z_1$
and $z_2$ are the two end curve sections (see e.g.
\cite[III.5]{BPV}). Moreover, $\widetilde{Y}$ is the blow up of
$Y$ at $o$ with exceptional curve $\widetilde{E}$, and
$\widetilde{c}^*(E)=p\widetilde{E}$. Set
$\linebundle=\cO_{\widetilde{X}}(-kE^*)$ for some $k\in\setZ$.
Then $H^0(\widetilde{X},\linebundle)
= H^0(\widetilde{Y}, \cO(-k\widetilde{E}))_{\theta([l'])}$ which
equals the subspace of $\setC\{z_1,z_2\}$ generated by monomials
of degree $\deg\geq k$ and $\deg \equiv k\ ({\rm mod} \ p)$.
Therefore, if $l=nE$,  the monomial sections $z^\alpha$ of
$\linebundle$  with $\alpha_1+\alpha_2=k+ip$ ($0\leq i<n$) are
linearly independent and the space spanned by them projects
bijectively onto $V$. Since ${\mathrm
div}_{\linebundle}(z^\alpha)=\alpha_1H_1+\alpha_2H_2+iE$, the
vanishing orders of all these divisors at the intersection point
$H_1\cap E$ are all distinct. Hence, any set of section of
$\linebundle$ with the same cardinality and divisors projects into
a linearly independent set in $V$, which necessarily form a basis
of $V$.

 Now, we consider a resolution $\pi:\widetilde{X}\to X$
whose graph has $|\cV|>1$ vertices. We fix a vertex $v\in\cE$, and
we will use the notations of (\ref{ss:res}). By induction, we
assume that the statements of the lemma are true for $\bar{\pi}$.

First we prove statement (\ref{lem:35.2}) for $\pi$.
Since $R^\flat$ is injective, we have to show its surjectivity.
For this consider all the monomial sections of
$\linebundle|_{\wxb}$ with non-zero class in the target of
$R^\flat$. They have the form
$\bar{M}=\prod_{i\in\cE'}\bar{z}_i^{\alpha_i}\cdot y_w^\beta$
(where $\beta=0$ if $w\not\in \bar{\cE}$) such that
\begin{equation*}\label{eq:3.10}
\sum_{i\in\cE'}
\alpha_i\bar{E}_i^*+\beta\bar{E}_w^*+R(c_1(\linebundle))=\bar{l}
\end{equation*}
for some $\bar{l}\in \bar{L}_{\geq 0}$ and $\bar{l}\not\geq
\bar{E}_w$. In particular, $\bar{l}$ is supported on the closure
of $\bar{E}\setminus \bar{E}_w$, hence  $(i(\bar{l}),E_v)=0$.
Since
$R(\sum_{i\in\cE'} \alpha_iE^*_i-\beta
E_v+c_1(\linebundle)-i(\bar{l}))=0$,
one has
\begin{equation}\label{eq:3.12}
\sum_{i\in\cE'} \alpha_iE^*_i-\beta
E_v+c_1(\linebundle)-i(\bar{l})=-\alpha_v E_v^*
\end{equation}
for some $\alpha_v\in \setZ$. Since  $\beta E_v+i(\bar{l})\in
L_{\geq 0}$, and $\alpha_v=-\beta E^2_v+(c_1(\linebundle),E_v)\geq
0$, we get that
$z^\alpha=\prod_{i\in\cE}z_i^{\alpha_i}$ is a monomial section of
$\linebundle$, such that the divisors of $R^0(z^\alpha)$ and
$\bar{M}$ coincide. By the inductive step the collection
$\{R^0(z^\alpha)\}_\alpha$ is a generator set of the target of
$R^\flat$. In particular, $R^\flat$ is onto proving part
(\ref{lem:35.2}).

Moreover, if we replace above each section $z^\alpha$ by another
section $\gs_\alpha$ of $\linebundle$ with the same divisor, then
by the same inductive argument as above the set
$\{R^0(\gs_\alpha)\}_\alpha$ form a basis of the target of
$R^\flat$. But, since  $R^\flat$ is an isomorphism, the classes of
$s_\alpha$ necessarily form a basis too. This proves part
(\ref{lem:35.1}) for $l=E_w$ and any $\linebundle$ with
$(c_1(\linebundle),E_v)\geq 0$.

Now we turn to statement (\ref{lem:35.1}) for general
$\linebundle$ and $l\in L_{\geq 0}$. First we consider the
following infinite `computation sequence' $\{x_n\}_{n\geq 0}$,
$x_n\in L_{\geq 0}$, such that $x_0=0$, $x_{n+1}=x_n+E_{u(n)}$,
where $u(n)\in\cV$ is provided by the following principle:

\begin{enumerate}[\ \ \ \ (a)]
\item\label{it.1}
If $(-c_1(\linebundle)+x_n, E_u)\leq 0$ for all $u\in\cV$ then
$u(n)=w$.
\item\label{it.2}
If $(-c_1(\linebundle)+x_n,E_u)>0$ for some $u$, then take one of
them for $u(n)$.
\end{enumerate}
One can show (see e.g. \cite[(4.2)]{Line}) that both steps occur
infinitely many times.  Moreover, at the beginning of step
(\ref{it.1}), $-c_1(\linebundle)+x_n\in\cS'$. Since
$\cS'\cap\{l'\in L'\,:\, l'\not\geq l-c_1(\linebundle)\}$ is
finite, cf. (\ref{eq:finite}), we get that $x_k\geq l$ for some
$k$.

Next we analyze the steps of $\{x_n\}_{n=0}^k$ and we show by
induction on $n$ the validity of (\ref{lem:35.1}) for
$V_n:=H^0(\linebundle)/H^0(\linebundle(-x_n))$. In case
(\ref{it.2}), the natural projection $V_{n+1}\to
V_n$ is an isomorphism and the divisors of the monomials can also
be identified  (cf. (\ref{ex:31})).
In case (\ref{it.1}), one has the exact sequence
\begin{equation*}
0\longrightarrow
\frac{H^0(\linebundle(-x_n))}{H^0(\linebundle(-x_n-E_w))}\longrightarrow
V_{n+1}\longrightarrow
V_n\longrightarrow 0.
\end{equation*}
(\ref{lem:35.1}) is valid for $V_n$ by the inductive hypothesis,
and for the left hand side too by the particular case already
proved ($l=E_w$); hence it works for the middle term as well. This
ends the induction showing (\ref{lem:35.1}) for $V_k$.  Finally,
consider the projection $V_k\to
V=H^0(\linebundle)/H^0(\linebundle(-l))$. Any set of
$H^0(\linebundle)$ which satisfy the assumption of
(\ref{lem:35.1}) for $V$ can be completed (by adding monomial
sections whose classes in $V$ are zero) to a set which satisfies
the assumption of (\ref{lem:35.1}) for $V_k$. Hence, since $V_k$
satisfies (\ref{lem:35.1}), so does $V$.

\bibliographystyle{amsplain}\bibliography{hivatkozas}
\end{document}